\newtheorem{teo}{Theorem}[section]
\newtheorem{prop}[teo]{Proposition}
\newtheorem{lem}[teo]{Lemma}
\newtheorem{cor}[teo]{Corollary}
\theoremstyle{definition}
\newtheorem{discussion}[teo]{Discussion}
\theoremstyle{definition}
\newtheorem{oss}[teo]{Remark}
\theoremstyle{definition}
\newtheorem{ex}[teo]{Example}
\newcommand{\N}{\mathbb{N}}
\newcommand{\K}{\mathbb{K}}
\newenvironment{acknowledgements}%
{\null\vfill\begin{center}%
		\bfseries Acknowledgements\end{center}}%
{\vfill\null}
\newcommand{\keywords}[1]{\emph{Keywords:} #1}
\newcommand{\MSC}[1]{\emph{Mathematics Subject Classification 2010:} #1}
\newcounter{lastnote}
\title{An algorithm for computing the Arf closure of an algebroid curve with more than one branch} 
\author{N. Maugeri\footnote{\emph{e-mail: nicola.maugeri.1992@gmail.com}}, G. Zito\footnote{\emph{e-mail: giuseppezito@hotmail.it}}}
\date{}
\begin{document}

\maketitle
\begin{abstract}
In this paper, we give a fast algorithm for the computation of  the Arf closure of an algebroid curve with more than one branch, generalizing an algorithm presented by Arslan and Sahin for the algebroid branch case.
\end{abstract}
\keywords{algebroid curves, Arf closure, Arf ring, good Arf semigroup, multiplicity sequence.}
\MSC{13A18, 14H20, 20-04.}

\section*{Introduction}
Let $R$ be an algebroid curve, i.e, following Zariski's  terminology, a one-dimensional, reduced, local ring of the form $\K[[x_1,x_2,\ldots,x_k]]/I$, where $\K$ is a field, $x_i$ are indeterminates and $I=P_1\cap\ldots\cap P_n$, with $P_i$ prime ideals of height $k-1$. If $\K$ is algebrically closed, these rings can be obtained as the completion of local rings of algebraic curves at a singular point.
Since $R$ is reduced, the ideal $I$ can be written as intersection of its minimal primes \hbox{$I=\bigcap_{i=0}^nP_i$}. Thus we can consider the inclusion
$\tau:R\hookrightarrow \K[[x_1,\ldots,x_k]]/P_1\times \ldots \times \K[[x_1,\ldots,x_k]]/P_n$.\\
Furthermore, we will set $R^i=\K[[x_1,\ldots,x_k]]/P_i$ for $i=1,\ldots n$ and these rings will be called algebroid branches. Considering the integral closures in $Q(R)$, we have $\overline{R}\cong \overline{R^1}\times \ldots \times \overline{R^k}$
where each $\overline{R^i}$ is a complete one-dimensional domain, that is a DVR, thus we can associate to each element $x\in \K[[t_i]]$ a valuation $\nu_i(x)$. Finally, because $R\subseteq \K[[t_1]]\times\K[[t_2]]\times\ldots\times\K[[t_n]]$, we can define the valuation of an element $y\in R$, such that $\tau(y)=(\phi_1(t_1),\ldots,\phi_n(t_n))$, as the vector $\nu(y)=(\nu_1(\phi_1(t_1)),\ldots,\nu_n(\phi_n(t_n)))$.\\
Now let us consider the submonoid $\nu(R)=\{\nu(x):x\in R\}\subseteq \N^n$
which is a local good semigroup (cf.\cite[pag.11]{anal:unr}).\\
We recall that a good semigroup $S$ is an Arf semigroup if $S(\alpha)-\alpha$ is a semigroup for all $\alpha\in S$, where $S(\alpha)=\{\beta\in S: \beta\geq \alpha\}$. Similarly,  $R$ is an Arf ring if $x^{-1}R(\alpha)$ is a ring for all $\alpha\in \nu(R)$, where  $R(\alpha)=\{r\in R: \nu(r)\geq \alpha\}$. Then it is possible to define the Arf closure $R^*$ of a ring $R$ as the smallest Arf ring containing $R$ (cf.\cite{Arf:multseq,Lipman:stable,Arf:characters,anal:unr}). Given an algebroid curve $R$, the semigroup $\nu(R^*)$ associated to $R^*$ is an Arf semigroup (cf.\cite[Prop 5.10]{anal:unr}).
The study of the Arf closure is motivated by its important role in the definition of the equivalence of algebroid curves. The equivalence of algebroid curves was defined in \cite{Zar:equi} as a generalization of the equivalence of algebroid branches; since two algebroid curves are equivalent if they have the same Arf closure (cf.\cite{anal:unr}), it is interesting to determine the Arf C
closure of an algebroid curve.\\
In this work, we  generalize to the case of algebroid curves the algorithm, presented by Feza Arslan and Nihl Sahin, for the computation of the Arf closure of an algebroid branch (cf.\cite{Arf:closure}) \\
In Section \ref{section1}, we  define the multiplicities of the semi-local rings in the Lipman sequence of an algebroid curve. Then, we  see how to associate to an algebroid curve a semigroup and its multiplicity tree. Furthermore, we  introduce the minimal tree of $R$, isomorphic to the multiplicity tree of $\nu(R^*)$, by associating to each node $\textbf{n}_i^j$ of the multiplicity tree an element of minimal valuation $\textbf{n}_i^j$ in  the corresponding blow up of $R$. Then we conclude the section by introducing a recursive method for the computation of the Arf closure of an algebroid curve.\\
In Section \ref{section2}, we introduce an algorithm for the computation of the multiplicity tree and the minimal tree of an algebroid curve with two branches. This algorithm will return the parametrizations of all rings in the Lipman sequence. Furthermore, we also present a method for computing the Arf closure of the algebroid curve using the information given by the algorithm (cf. Discussion \ref{D1}).\\
In Section \ref{section3} we  see how to generalize the algorithm presented in the previous section to the case of curves with an arbitrary number of branches.\\
In Section \ref{section4}, we  give a way to  improve the efficiency of our algorithm.
In particular, we   see that it is possible to compute the Arf closure of $R$ by applying the algorithm to an algebroid curve with a simpler parametrization obtained by truncating all the monomials with order bigger than the conductor of the Arf semigroup $\nu(R^*)$ (cf.Theorem \ref{T1}).
Thus, in order to determine this bound, we  need a way to estimate the conductor of  $\nu(R^*)$ directly from the parametrization of $R$. We firstly analyze the case of curves with two branches having distinct multiplicity sequences along their branches (we can recover the multiplicity sequences by  using  the algorithm of Arslan and Sahin on each branch). In this case, it is possible to find a limitation for the conductor by using only the numerical properties given by the multiplicity sequences (cf.Theorem \ref {T2}). Then, we  study the case of two-branches algebroid curves with the same multiplicity sequence on their branches. In this case, we need to work on the parametrization of $R$ to find a suitable bound (cf.Lemma \ref{L3} and Proposition \ref{P1}). We conclude by seeing how it is possible to use the bound in the two-branches case to compute a bound in the general case (cf.Remark \ref{R8}). In the end, we present an example that illustrates how the computation of the Arf closure is simplified by the truncation given by the given bound (cf.Example \ref{E4}).\\
The procedures presented here have been implemented in GAP (\cite{GAP4}).

\section{Preliminaries}\label{section1}

\subsection{The multiplicity tree of an algebroid curve $R$} \label{subs1}
If $R$ is an algebroid curve we can always associate to it a parametrization $$x_1=(\phi_{11}(t_1),\ldots, \phi_{1n}(t_n)),\ldots, x_k=(\phi_{k1}(t_1),\ldots, \phi_{kn}(t_n))$$ such that
$$ R\cong\mathbb{K}[[ \left( \phi_{11}(t_1),\ldots,\phi_{1n}(t_n)\right),\ldots, \left( \phi_{k1}(t_1),\ldots,\phi_{kn}(t_n)\right)]],$$
cf.\cite{Campillo:algebroid}.\\
In this paper, we will only consider algebroid curves given through their parametrization.\\
Since $R$ is a local ring we can define its blow-up as $Bl(R)=\cup_{i=0}^{\infty}(\mathfrak{m}^n:\mathfrak{m}^n)$, where $\mathfrak{m}$ is its maximal ideal.\\
If $R$ is an algebroid curve with maximal ideal $\mathfrak{m}=(x_1,\ldots,x_k)$, then $Bl(R)=R[x,\frac{x_1}{x},\ldots,\frac{x_k}{x}]$ (see \cite[Prop 1.1]{Lipman:stable}), where $x$ is an element of $R$ with minimal valuation (this follows from the fact that $R\subseteq \bar{R}$ is a finite integral extension on $R$). In particular if $R$ is parametrized by 
$$x_1=(\phi_{11}(t_1),\ldots, \phi_{1n}(t_n)),\ldots, x_k=(\phi_{k1}(t_1),\ldots, \phi_{kn}(t_n)),$$ 
it is easy to see that
$$Bl(R)=\mathbb{K}\left[\left[x, \frac{\left( \phi_{11}(t_1),\ldots,\phi_{1n}(t_n)\right)}{x},\ldots, \frac{\left(\phi_{k1}(t_1),\ldots,\phi_{kn}(t_n)\right)}{x}\right]\right].$$ 
In the following we denote this ring with the symbol $[x^{-1}R]$ (it is the smallest ring containing $x^{-1}R$ ).
If we consider the Lipman sequence 
$$R=R_1\subseteq R_2\subseteq R_3\subseteq\ldots,$$ where $R_i=Bl(R_{i-1})$, since $\bar{R}$ is a finite $R$-module, there exists an integer $N\in \N$ such that $R_N=\K[[t_1]]\times\ldots\times\K[[t_n]]$.\\
We know that the rings $R_i$ are semilocal rings. A semilocal ring $S \subseteq \mathbb{K}[[t_1]]\times \dots \times \mathbb{K}[[t_n]] $, parametrized by 
$$S=\mathbb{K}[[ \left( \phi_{11}(t_1),\ldots,\phi_{1n}(t_n)\right),\ldots, \left( \phi_{k1}(t_1),\ldots,\phi_{kn}(t_n)\right)]],$$  can be always seen as a product of local rings. In other words, there exists a partition $\mathfrak{P}(S)=\left\{ P_1,\ldots,P_t\right\}$ of $\left\{1,\ldots,n \right\}$, with $$ P_i=\left\{ q_{i,1},\ldots, q_{i,k(i)} \right\},$$
such that
$$S=S_1 \times \dots \times S_t,$$
where $S_i$ is a local ring contained in $\mathbb{K}[[t_{q_{i,1}}]] \times \dots \times \mathbb{K}[[t_{q_{i,k(i)}}]].$ 

We have $S=\prod_{i=1}^{t}{S(P_i)}$, where
$$ S(P_i)=\mathbb{K}\left[\left[ \left( \phi_{1q_{i,1}}(t_{q_{i,1}}),\ldots,\phi_{1q_{i,k(i)}}(t_{q_{i,k(i)}})\right),\ldots, \left( \phi_{kq_{i,1}}(t_{q_{i,1}}),\ldots,\phi_{kq_{i,k(i)}}(t_{q_{i,k(i)}})\right)\right]\right].$$
Now we need to define the multiplicity vector of a semilocal ring $S$. We have two cases

\begin{itemize}
	\item  $S$ is local.
	
	We define $\textrm{mult}(S)=\min \left\{ \nu(s): s\in S \right\}$, where $\nu$ is the valuation defined in $\mathbb{K}[[t_1]]\times \dots \times \mathbb{K}[[t_n]]$. 
	It is easy to see that  if 
	$$ S=\mathbb{K}[[ \left( \phi_{11}(t_1),\ldots,\phi_{1n}(t_n)\right),\ldots, \left( \phi_{k1}(t_1),\ldots,\phi_{kn}(t_n)\right)]],$$
	then
	$$ \textrm{mult}(S)[i]=\min\left\{ \textrm{ord}(\phi_{1i}(t_i)), \ldots, \textrm{ord}(\phi_{ki}(t_i))\right\} \textrm{ for all } i=1,\ldots,n,$$
	where with $\textrm{mult}(S)[i]$ we mean the $i$-th component of the $n$-vector $\textrm{mult}(S)$.
		
	Because the field $\mathbb{K}$ is infinite we can always find a linear combination $x_S$ of the generators of $S$, such that $\nu(x_S)=\textrm{mult}(S)$. We also set $\textrm{mult}^{*}(S)=\left\{ \textrm{mult}(S) \right\}$. Note that the multiplicity of $S$ as local ring is given by the sum of components of $\textrm{mult}(S)$.
	\item $S$ is not local
	
	Suppose that $\mathfrak{P}(S)=\left\{ P_1,\ldots,P_t\right\}$, with $$ P_i=\left\{ q_{i,1},\ldots, q_{i,k(i)} \right\},$$
	we have
	$$S=\prod_{i=1}^{t}{S(P_i)},$$
	and we already know how to compute $\textrm{mult}(S(P_i))$ because $S(P_i)$ is local in  $\mathbb{K}[[t_{q_{i,1}}]] \times \dots \times \mathbb{K}[[t_{q_{i,k(i)}}]]. $
	
	Then we can define:
	$$ \textrm{mult}^{*}(S)=\left\{ \textrm{lmult}(S(P_i)): i=1,\ldots,t \right\},$$
	
	where $ \textrm{lmult}(S(P_i))$ is an $n$-vector such that  
	
	\begin{itemize} 
		\item$ \textrm{lmult}(S(P_i))[j]=0$ if $j \notin P_i$;
		
		\item $ \textrm{lmult}(S(P_i))[q_{i,j}]=\textrm{mult}(S(P_i))[j]$, for $j=1,\ldots,k(i)$.
		
	\end{itemize} 
	To each element of $\textrm{mult}^{*}(S)$ we can associate an element of minimal value in $S$. If $x_{S(P_i)}$ is an element of minimal value on $S(P_i)$ we consider the element $x_S^i \in S$ such that 
	\begin{itemize} 
		\item$ x_S^i[j]=1$ if $j \notin P_i$;
		
		\item $ x_S^i[q_{i,j}]=x_{S(P_i)}[j]$, for $j=1,\ldots,k(i)$.
	\end{itemize}
	
\end{itemize}

Thus from the Lipman sequence of blow-ups we can recover the following sequence of subsets of $	\mathbb{N}^n:$ 
$$ \textrm{mult}^{*}(R_1), \textrm{mult}^{*}(R_2), \ldots, \textrm{mult}^{*}(R_N)=\left\{ (1,0,\ldots,0), (0,1,0,\ldots,0),\ldots,(0,\ldots,0,1)\right\}.$$
We define the multiplicity tree associated to the ring $R$, as the tree $T(R)$ with nodes in $ \cup_{i=0}^N{ \textrm{mult}^{*}(R_i)}$ and such that two nodes $v,w$ are linked if and only if there exists $m$ such that $v \in  \textrm{mult}^{*}(R_m)$ and $w \in  \textrm{mult}^{*}(R_{m+1})$
(or viceversa) and we have $ \langle v,w \rangle \neq 0$ (where with $\langle v,w \rangle$ we mean the standard scalar product in $\mathbb{N}^n$).
We can also define the minimal tree by assigning to each node $\textrm{lmult}(R_j(P_i)) $ of the multiplicity tree the corresponding element $x_{R_j(P_i)}$ of minimal value.\\ 

\subsection{The computation of the Arf closure $R^*$}
Now we want to show how the Lipman sequence can be used to compute and to give a presentation for  the Arf closure $R^*$ of $R$.\\

Arf, in his work (cf.\cite[p.267]{Arf:multseq}), showed that if $R$ is an irriducible algebroid curve of $\K[[t]]$ then 
$$R^*=\K+x\cdot Bl(R)^*,$$
where $x$ is a minimal valutation element in $R$.
As a consequence of this fact, $R^*$ can be presented as:
$$R^*=\K+\K\cdot x_1+ \K\cdot x_1 x_2+\ldots +x_1 x_2\ldots x_{N-1}\K[[t]],$$
where  $x_i$ is an element of minimal valuation in  $R_i=Bl(R_{i-1})$ (where $R=R_1$).
Now we want to adapt this computation to an algebroid curve $R \subseteq \mathbb{K}[[t_1]]\times \dots \times \mathbb{K}[[t_n]]$. We  build the Arf closure by using the following inductive process on the number of branches $n$.
\begin{itemize}
	\item Base case: $n=1$. It was proved by Arf (cf.\cite{Arf:multseq}).
	
	\item Inductive step. We suppose that we are able to solve the problem for $m<n$ and we give a solution for $n$.
	
	If $R$ is not local then there exists a partition $\mathfrak{P}(R)=\left\{ P_1,\ldots,P_t\right\}$, with $$ P_i=\left\{ q_{i,1},\ldots, q_{i,k(i)} \right\},$$
	such that
	$$R=R^1 \times \dots \times R^t,$$
	
	where $R^i$ is a local ring contained in $\mathbb{K}[[t_{q_{i,1}}]] \times \dots \times \mathbb{K}[[t_{q_{i,k(i)}}]]. $ 
	
	In this case, we have:
	$$ R^{*}=(R^1)^{*} \times \dots \times (R^t)^{*},$$
	
	and, for the inductive step, we can compute each $(R^i)^{*}$, since $k(i)<n$ for all $i$. 
	If $R=R_1$ is a local ring, using the same idea of Arf (cf.\cite[p.267]{Arf:multseq}) it is easy to see that
	$$ R_1^{*}=\mathbb{K} (1,\ldots,1)+x_1 \cdot( R_2)^{*},$$
	where $x_1$ is an element of minimal value in $R_1$ and $R_2=\textrm{Bl}(R_1)$ is the blow-up of $R_1$.
	
	If $R_2$ is local we can compute $R_2^{*}$ in the same way using $R_3=\textrm{Bl}(R_2)$ and an element of minimal value $x_2$ in $R_2$. But we know that there exist an $N$ such that $R_N$ is not local (in fact the blow-up sequence has to stabilize into $\overline{R}=\mathbb{K}[[t_1]]\times \dots \times \mathbb{K}[[t_n]]$) and therefore we are able to compute $R_N^{*}$ as we have already seen in the non-local case.
	
	Then, if we suppose that $N$ is the first integer such that $R_N$ is not local, we have
	\begin{eqnarray*}(R_1)^{*}&=& \mathbb{K} (1,\ldots,1)+x_1 \cdot( R_2)^{*}   \\ (R_2)^{*}&=& \mathbb{K} (1,\ldots,1)+x_2 \cdot( R_3)^{*} \\
		\ldots & & \ldots \\  (R_{N-1})^{*}&=& \mathbb{K} (1,\ldots,1)+x_{N-1} \cdot( R_N)^{*}, 
	\end{eqnarray*}
	and from this it follows that
	$$ (R_1)^{*}=\mathbb{K} (1,\ldots,1)+\mathbb{K}x_1+\mathbb{K} x_1x_2+\ldots+x_1 \dots x_{N-1} \cdot( R_N)^{*}.$$
	where $x_i$ is an element of minimal valuation of $R_i$.
\end{itemize}
From this procedure we see that it is important to compute the blow-up sequence $R_i$  until $R_m= \mathbb{K}[[t_1]]\times \dots \times \mathbb{K}[[t_n]]$ to understand how to compute $R^{*}$. In the following section, we will present an algorithm that gives us a way to compute this sequence along its multiplicity tree starting from a parametrization of the ring $R$.
\\
\subsection{The properties of the multiplicity tree of an Arf good semigroup}
We can notice that from the previous construction for the Arf closure $R^*$, it easily follows that the  multiplicity tree of $R$ defined above is nothing but the multiplicity tree of the Arf good semigroup $\nu(R^*)$ (cf.\cite{Arf:multseq,Lipman:stable,Arf:characters,anal:unr}). Thus, it is useful to recall some properties of this kind of objects. Given an Arf semigroup $S\subseteq \mathbb{N}^n$, its multiplicity tree is a tree $T$ such that the nodes are vector $ \textbf{n}_i^j \in \mathbb{N}^n$, where with $\textbf{n}_i^j$ we mean that  this node is in the $i$-th branch on the $j$-th level (the root of the tree is $\textbf{n}_{1}^1=\textbf{n}_i^1 $ for all $i$ because we are in the local case and at level one all the branches must be glued) and we have 

$$ S=\left\{\textbf{0}\right\} \bigcup_{T'} \left\{ \sum_{\textbf{n}_i^j \in T' } {\textbf{n}_i^j}\right\},$$

where $T'$ ranges over all finite subtree of $T$ rooted in   $\textbf{n}_1^1$.

Furthermore a tree $T$ is a multiplicity tree of an Arf semigroup if and only if its  nodes  satisfy the following properties (see. \cite[Thm 5.11]{anal:unr}).

\begin{itemize}
	\item there exists $L \in \mathbb{N}$ such that for $m \geq L$, $\textbf{n}_i^m=(0,\ldots,0,1,0\ldots,0)$ (the nonzero coordinate is in the $i$-th position) for any $i=1,\ldots,n$;
	\item $\textbf{n}_i^j[h]=0$ if and only if $\textbf{n}_i^j$ is not in the $h$-th branch of the tree;
	\item each $\textbf{n}_i^j$ can be obtained as a sum of nodes in a finite subtree $T'$ of $T$ rooted in $\textbf{n}_i^j$.
\end{itemize}
Notice that from these properties it follows that we must have multiplicity sequences along each branch.
\\ \\
Suppose now that  $E$ is an ordered collection of $n$ multiplicity sequences (that will be the multiplicity branches of a multiplicity tree). Since any multiplicity sequence is a nonincreasing sequence of integers that stabilizes to 1, we can describe them by the vectors
$$\textbf{M}_i=[M_i[1],\ldots, M_i[l_i]],$$
where if $\textbf{M}_i \neq [1,1\ldots]$,  $l_i=\max\{j\hspace{0.1cm}:\hspace{0.1cm}M_i[j]\neq 1\}$ will be said length of $i$-th sequence. For $\textbf{M}_i=[1,1,\ldots]$ we will set $l_i=1$ by definition.

Denote by $\tau(E)$ the set of all multiplicity trees having the $n$ branches in $E$.

We give now a way to describe a tree of $\tau(E)$.
If $T \in \tau(E)$, it can be represented by an upper triangular matrix $ n \times n$  $$M(T)_{E}=\left(  \begin{matrix} 0 & p_{1,2} & p_{1,3} & \ldots & p_{1,n} \\ 
0 & 0 & p_{2,3} & \ldots & p_{2,n} \\ \ldots & \ldots & \ldots & \ldots & \ldots \\ 0 & 0 & 0 & \ldots& p_{n-1,n} \\ 0 & 0 & 0 & \ldots & 0 \\  \end{matrix}\right),$$ where $p_{i,j}$ is the highest level such that the  $i$-th and the $j$-th branches are glued in $T$.

\section{The algorithm in the two-branches case}
\label{section2}
In this section we give an algorithm for the computation of the Arf closure of an algebroid curve that has the following parametrization:
$$ R=\mathbb{K}[[\left(\phi_1(t),\psi_1(u)\right),\ldots, \left(\phi_n(t),\psi_n(u)\right)]].$$

Notice that, in order to lighten the notation, we are focusing on the two-branches case. However, the algorithm can be easily adapted to the general case with small modifications as we will see in section \ref{section3}.

First of all we fix some notations.
In the following we will always assume that a parametrization does not contain an element $y=(\phi(t),\psi(u))$ such that $\textrm{ord}(\phi(t))=\textrm{ord}(\psi(u))=0$ and with  $\phi(0)=\psi(0)$. If, in the following constructions, we will produce a parametrization that contains such an element, we will always convert it to $\overline{y}=y-(\phi(0),\psi(0))$ (it is possible to do that because $(\phi(0),\psi(0))$ is a multiple of the unit vector).
For each $m \geq 0 $ we will denote by
$$ R_m=\mathbb{K}\left[\left[\left(\phi_1^{(m)}(t),\psi_1^{(m)}(u)\right),\ldots, \left(\phi_{n(m)}^{(m)}(t),\psi_{n(m)}^{(m)}(u)\right)\right]\right],$$
the parametrization of the $m$-th blow-up of $R$ (we put by definition $R_1=R$).
Furthermore, if $R_m$ is local, we denote by 
$ \textrm{mult}(R_m)=\min\left\{ \nu(r): r \in R_m \right\}$, the multiplicity vector of the $m$-th blow-up, where $\nu$ is the valutation defined in $R$. With our notation it is easy to see that we have
$$ \textrm{mult}(R_m)=\left(\min\left\{ \textrm{ord}(\phi_i^{(m)}(t)),i=1,\ldots,n(m) \right\},\min\left\{ \textrm{ord}(\psi_i^{(m)}(u)),i=1,\ldots,n(m) \right\}\right).$$
Finally, always with the assumption that $R_m$ is local, we denote by $x_{R_m}$ an element of $R_m$ with valutation $\textrm{mult}(R_m)$.

\begin{oss}
	\label{R1}
	For the choice of the element $x_{R_m}$  we can always consider either one of the $\left(\phi_i^{(m)}(t),\psi_i^{(m)}(u)\right)$ or the sum of two of them. To see it we denote by $y_i=\left(\phi_i^{(m)}(t),\psi_i^{(m)}(u)\right)$ for $i=1, \ldots,n(m)$.
	If there exists  $y_i$  in the parametrization such that $\textrm{mult}(R_m)=\nu(y_i)$ we can set $x_{R_m}=y_i$.
	Otherwise, for the definition of $\textrm{mult}(R_m)$ there must exist $i,j$ with $i\neq j$ such that 
	$$ \left(\textrm{ord}(\phi_i^{(m)}(t)),\textrm{ord}(\psi_j^{(m)}(u))\right) =\textrm{mult}(R_m),$$
	then $y_i+y_j$ is a good choice for $x_{R_m}$ (in this case order cancellations cannot happen).	
\end{oss}

The following lemma will help us to understand when a $R_m$ is not local from its parametrization.

\begin{lem}
	\label{L1}
	Consider $$ R=\mathbb{K}[[\left(\phi_1(t),\psi_1(u)\right),\ldots, \left(\phi_n(t),\psi_n(u)\right)]].$$
	We have that 
	$$ R=\mathbb{K}[[ \phi_1(t),\ldots,\phi_n(t)]] \times \mathbb{K}[[ \psi_1(u),\ldots,\psi_n(u)]]$$ if and only if  at least one of the following two conditions holds:
	\begin{itemize}
		\item There exists $\left(\phi_i(t),\psi_i(u)\right)$ in the parametrization such that
		$$ \textrm{ord}(\phi_i(t))\cdot \textrm{ord}(\psi_i(t))=0   \textrm{ and } {\textrm{ord}(\phi_i(t))}^2+ {\textrm{ord}(\psi_i(t))}^2\neq 0; $$
		\item  There exists $y=\left(\phi_i(t),\psi_i(u)\right)$ in the parametrization such that
		$$ \nu(y)=(0,0) \textrm{ and } \phi_i(0) \neq \psi_i(0).$$
	\end{itemize}
\end{lem}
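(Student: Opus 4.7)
My plan is to read both directions of the equivalence off from the image of $R$ in the semisimple quotient of $\K[[t]]\times\K[[u]]$. Set $J:=\mathfrak{m}_t\times\mathfrak{m}_u$, which is the Jacobson radical of $\K[[t]]\times\K[[u]]$, and $J':=R\cap J$. The composition $R\hookrightarrow\K[[t]]\times\K[[u]]\twoheadrightarrow\K\times\K$ identifies $R/J'$ with a $\K$-subalgebra $\overline R\subseteq\K\times\K$ generated by $(1,1)$ together with the constant-term pairs $(\phi_i(0),\psi_i(0))$. Since any $\K$-subalgebra of $\K\times\K$ containing $(1,1)$ is either the diagonal copy of $\K$ or the whole of $\K\times\K$, everything boils down to distinguishing these two cases. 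The second case occurs iff some generator has constant-term pair off the diagonal, i.e.\ $\phi_i(0)\neq\psi_i(0)$; inspecting when this can happen, and folding in the paper's normalization convention (which converts any generator with $\phi_i(0)=\psi_i(0)$ and both components units into one with positive-order components), recovers exactly the two disjunctive cases in the statement.

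For the forward direction I would argue the contrapositive: if neither condition holds then, after applying the normalization convention, every generator $(\phi_i,\psi_i)$ satisfies $\phi_i(0)=\psi_i(0)=0$ and thus lies in $J$, so $R\subseteq\K\cdot(1,1)+J'$. Using the $J'$-adic completeness of $R$ (built into the algebroid setting of the paper), every element of the form $c(1,1)+j'$ with $c\in\K\setminus\{0\}$ and $j'\in J'$ is invertible via the geometric series $c^{-1}\sum_{k\geq 0}(-j'/c)^k\in R$. Consequently $J'$ is the unique maximal ideal, $R$ is local, and hence cannot split as a product of two nontrivial rings.

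For the reverse direction, either condition supplies a generator $y=(\phi_i,\psi_i)$ with $c:=\phi_i(0)\neq d:=\psi_i(0)$. The element $e_0:=(y-d\cdot(1,1))/(c-d)\in R$ reduces modulo $J'$ to the nontrivial idempotent $(1,0)\in\K\times\K$, so $e_0^2-e_0\in J'$; and since $2e_0-1$ has residue $(1,-1)$ (equal to $(1,1)$ in characteristic $2$), it is a unit in $R$. Newton iteration $e_{k+1}:=e_k-(e_k^2-e_k)(2e_k-1)^{-1}$ then converges in the $J'$-adically complete ring $R$ to an idempotent $e$ congruent to $(1,0)$ modulo $J'$, and since $(1,0)$ is the only idempotent of $\K[[t]]\times\K[[u]]$ with this residue, $e=(1,0)$ exactly. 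The orthogonal splitting $R=eR\oplus(1-e)R$ identifies $eR$ with $\pi_1(R)=\K[[\phi_1,\ldots,\phi_n]]$ and $(1-e)R$ with $\pi_2(R)=\K[[\psi_1,\ldots,\psi_n]]$, yielding the decomposition. The main technical point throughout is the $J'$-adic completeness of $R$, used both for summing the geometric series and for the Newton convergence; this should be cited from Zariski's setup or verified directly from the presentation $R=\K[[(\phi_i,\psi_i)]]$.
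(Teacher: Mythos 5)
Your proof is correct, but it follows a genuinely different route from the paper's. The paper argues the $(\Leftarrow)$ direction by explicit algebraic manipulation: starting from a generator whose first component is a unit and whose second is not, it inverts that component inside $\K[[\phi_1(t)]]$, multiplies back into $R$, subtracts $(1,1)$, inverts again, and multiplies to land on $(1,0)\in R$; the $(\Rightarrow)$ direction is then dispatched in one sentence as ``trivial.'' Your version instead organizes both directions around the reduction $R\to R/J'\subseteq\K\times\K$ modulo the Jacobson radical, the dichotomy $R/J'\in\{\K\cdot(1,1),\ \K\times\K\}$, and Hensel-style idempotent lifting. This buys two things: the $(\Rightarrow)$ direction (neither condition $\Rightarrow$ all normalized generators in $J$ $\Rightarrow$ $R=\K(1,1)+J'$ is local $\Rightarrow$ no nontrivial product decomposition) is spelled out cleanly where the paper only gestures, and the $(\Leftarrow)$ direction replaces an ad hoc chain of inversions with the standard ``lift the idempotent'' mechanism. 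The price is reliance on $J'$-adic completeness of $R$, which you use for the geometric series, for convergence of the Newton iterates, and implicitly for the claim that $2e_0-1$ is a unit in $R$. That last point is asserted rather than proved; the cleanest way to close it is to note that once $e_0$ exists with residue $(1,0)\notin\K(1,1)$ we must have $R/J'=\K\times\K$, so $2e_0-1$ has unit residue $(1,-1)$ there, and completeness forces $J'\subseteq\operatorname{Jac}(R)$, whence $2e_0-1$ is a unit of $R$ (equivalently, $(2e_0-1)^2\equiv 1\pmod{J'}$ and the inverse is $(2e_0-1)(1+w)^{-1}$ via the geometric series). As you acknowledge, the completeness hypothesis is part of what the paper's $\K[[\cdots]]$ notation encodes, and it should be invoked explicitly if this argument were to replace the paper's proof.
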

\begin{proof}
	($\Leftarrow$). 
	Let us suppose that the first condition holds.
	Without loss of generality we can suppose that the element  $y=\left(\phi_1(t),\psi_1(u)\right)$ in the parametrization is such that
	$\textrm{ord}(\phi_1(t))=0$ and $\textrm{ord}(\psi_1(u)) \neq 0$.
	Then we have $\phi_1(0) \neq 0$.  Therefore $\phi_1(t)$ is invertible in $\mathbb{K}[[\phi_1(t)]]$ because its inverse is
	$$ (\phi_1(t))^{-1}=(\phi_1(0))^{-1}\cdot \sum_{i=0}^{+\infty}{(-1)^i\left(\frac{\phi_1(t)-\phi_1(0)}{\phi_1(0)}\right)^i}. $$
	Thus in $\mathbb{K}[[y]] \subseteq R$ there exists an element of the form $ z=\left( (\phi_1(t))^{-1}, g(u) \right) $. Then  we have $$ R \ni y \cdot z= \left( 1, \psi_1(u)\cdot g(u)\right)=\left(1,h(u)\right),  $$
	where $\textrm{ord}(h(u))>0$.
	But $(1,1) \in R $ so  $\left(1,h(u)\right)-(1,1)=(0,-1+h(u))$ belongs to $R$.
	Now, $h(u) \in \mathbb{K}[[\psi_1(u)]]$ and therefore $-1+h(u)$ is invertible in this ring. From this it follows again that there exist an element of the type $\left(l(t),(-1+h(u))^{-1} \right) \in R$ and we have:
	
	$$ R \ni (0,-1+h(u)) \cdot  \left(l(t),(-1+h(u))^{-1} \right)=(0,1) \Rightarrow (1,1)-(0,1)=(1,0) \in R.$$
	
	Finally we obtain that 
	
	$$ \mathbb{K}[[ \phi_1(t),\ldots,\phi_n(t)]] \times \left\{0\right\}=(1,0) \cdot R \subseteq R, $$ 
	$$ \left\{0\right\} \times \mathbb{K}[[ \psi_1(u),\ldots,\psi_n(u)]]=(0,1) \cdot R \subseteq R,$$
	therefore we have 
	$\mathbb{K}[[ \phi_1(t),\ldots,\phi_n(t)]] \times \mathbb{K}[[ \psi_1(u),\ldots,\psi_n(u)]] \subseteq R$ and because the 
	inverse containment is trivial we have our thesis. 
	Suppose now that the second condition holds. Let us consider $y=\left(\phi_i(t),\psi_i(u)\right)$ in the parametrization such that
	$$ \nu(y)=(0,0) \textrm{ and } \phi_i(0) \neq \psi_i(0).$$
	Thus if we consider $(\phi_i(0),\phi_i(0)) \in R $ we have that $y-(\phi_i(0),\phi_i(0)) \in R$ is an element that fulfills  the first condition and we can use the same arguments of the first part of the proof.\\
	($\Rightarrow$). It is trivial, in fact if we suppose by contradiction that in the parametrization does not appear elements that fulfill the condition of the theorem then it would easily follow that in $R$ we cannot find an element $(\phi(t),\psi(u))$ such that $\phi(t)$ is invertible and $\psi(u)$ is not invertible and this is absurd for the hypotheses on $R$.
\end{proof}

\begin{oss}
	\label{R2}
	
	If we have a ring $S$ such that $$S=\mathbb{K}[[ \phi_1(t),\ldots,\phi_n(t)]] \times \mathbb{K}[[ \psi_1(u),\ldots,\psi_n(u)]],$$
	then $S$ is not local and, following the notations of the first section, we have that
	$$ \textrm{mult}^*(S)=\left\{ (m_1,0), (0,m_2) \right\},$$
	where $ m_1$ is the multiplicity of the algebroid branch associated to  $S_1=\mathbb{K}[[ \phi_1(t),\ldots,\phi_n(t)]]$ and 
	$ m_2$ is the multiplicity of the algebroid branch associated to $S_2=\mathbb{K}[[ \psi_1(u),\ldots,\psi_n(u)]]$.
	It is easy to show that we have
	\begin{itemize}
		\item $m_1=\min \left\{ \textrm{ord}(\phi_i(t)-\phi_i(0)): i=1,\ldots,n \right\}$;
		\item $m_2=\min \left\{ \textrm{ord}(\psi_i(u)-\psi_i(0)): i=1,\ldots,n \right\}$.
	\end{itemize}
	Then we can denote by $x_S^1$ an element of $S_1$ with order $m_1$ and by   $x_S^2$ an element of $S_2$ with order $m_2$.
	It is clear  that there exist $i,j$ such that $x_S^1=\phi_i(t)-\phi_i(0)$ and   $x_S^2=\psi_j(u)-\psi_j(0)$.
\end{oss}

Now we want to develope an algorithm for the computation of the Arf closure  $R^*$ of $R$.
As we have seen in the previous section, we need to compute the blow-up chain $R_m$ of $R$ in order to find the multiplicity tree of $R^{*}$. In particular we have to find an integer $N$ such that $R_N=\mathbb{K}[[t]] \times \mathbb{K}[[u]]$.
From the properties of the ring of formal power series this is equivalent to find an $N$ such that $R_N$ is not local and such that $$ \textrm{mult}^*(R_N)=\left\{ (1,0), (0,1) \right\}.$$
Taking in account the definitions given at the beginning of this section we can consider the following algorithm.

\begin{algorithm}
	\SetKwData{Left}{left}
	\SetKwData{This}{this}
	\SetKwData{Up}{up}
	\SetKwFunction{Union}{Union}
	\SetKwFunction{FindCompress}{FindCompress}
	\SetKwInOut{Input}{input}
	\SetKwInOut{Output}{output}
	\caption{}
	\Input{$ R=\mathbb{K}[[\left(\phi_1(t),\psi_1(u)\right),\ldots, \left(\phi_n(t),\psi_n(u)\right)]]$}
	\Output{The sequence $R_m$ of blow-ups of $R$ until $R_m=\mathbb{K}[[t]] \times \mathbb{K}[[u]]$}
	\BlankLine
	$ m \longleftarrow 1$
	
	$ R_1 \longleftarrow R$
	
	\While{$\textrm{mult}^*(R_m) \neq \left\{ (1,0), (0,1) \right\}$}{ \If{$ R_m$ is local}{$ m \longleftarrow m+1$ 
			
			$ R_m \longleftarrow [(x_{R_{m-1}})^{-1} R_{m-1}]$}
		
		\If{$ R_m=R_m^1 \times R_m^2$ is not local}{$ m \longleftarrow m+1$ 
			
			$ R_m \longleftarrow \left[(x^1_{R_{m-1}})^{-1} R_{m-1}^1\right] \times \left[(x^2_{R_{m-1}})^{-1} R_{m-1}^2\right]  $}}

	\Return $R_1,R_2,\ldots,R_m$
	\label{algo_disjdecomp}
\end{algorithm}

The algorithm produces the blow-up chain because we know  that in the local case we have $R_m=\left[ (x_{R_{m-1}})^{-1} R_{m-1}\right]$ and  we have seen in the previous section that a parametrization for $R_m$ is therefore given by $$ R_m =\mathbb{K}\left[\left[ \frac{\left(\phi_1^{(m-1)}(t),\psi_1^{(m-1)}(u)\right)}{x_{R_{m-1}}},\ldots, \frac{\left(\phi_{n}^{(m-1)}(t),\psi_{n}^{(m-1)}(u)\right)}{x_{R_{m-1}}},x_{R_{m-1}}\right]\right].$$

On the other hand, if $R_{m-1}$ is not local we have that 

$$ R_{m-1}= \mathbb{K}[[\phi_1^{(m-1)}(t),\ldots,\phi_{n(m-1)}^{(m-1)}(t)]] \times  \mathbb{K}[[\psi_1^{(m-1)}(u),\ldots,\psi_{n(m-1)}^{(m-1)}(u)]], $$

therefore in order to find $R_m$ we have to apply the algorithm of Arslan-Sahin to each component of the cartesian product finding
$ R_m =\left[(x^1_{R_{m-1}})^{-1} R_{m-1}^1\right] \times \left[(x^2_{R_{m-1}})^{-1} R_{m-1}^2\right]  $ which can be computed as
$$  R_{m}= \mathbb{K}\left[\left[\frac{\phi_1^{(m-1)}(t)}{x_{R_{m-1}}^1},\ldots,\frac{\phi_{n(m-1)}^{(m-1)}(t)}{x_{R_{m-1}}^1},x_{R_{m-1}}^1\right]\right] \times \mathbb{K}\left[\left[\frac{\psi_1^{(m-1)}(u)}{x_{R_{m-1}}^2},\ldots,\frac{\psi_{n(m-1)}^{(m-1)}(u)}{x_{R_{m-1}}^2},x_{R_{m-1}}^2\right]\right]. $$
So, because at each step we know a parametrization for the $m$-th blow-up we have a way to compute the $m+1$-th and we can stop when we reach $\textrm{mult}^*(R_m) = \left\{ (1,0), (0,1) \right\}$.

\begin{oss}
	\label{R3}
	In the previous algorithm,we divide by an element of minimal valuation, considering element of the type $\displaystyle \frac{(\phi(t),\psi(u))}{x}$. It is convenient to work with such an element as a fraction (cancelling if possible the common factors between the numerator and the denominator) . In this way we can still express it by a finite set of information avoiding the problem of expanding it in power series.
\end{oss}

When the algorithm stops, we are able to build the multiplicity tree $T$ of $R^*$. It will be a multiplicity tree of an Arf semigroup of $\mathbb{N}^2$, therefore it can be represented by a collection $E=\left\{M_1,M_2\right\}$ of two multiplicity sequences and  an integer $p_1$, where $p_1$ is the highest level where the two branches of $T$ are still glued.
To find $p_1$ we have to check the first $m$ such that, in our algorithm, we obtain that $R_m$ is not local. Then we have $p_1=m-1$.

Furthermore, if $R_1=R,R_2,\ldots,R_m$ is the output of the algorithm we have that:
$$M_1[i]=\textrm{mult}(R_i)[1] \textrm{ for } i=1,\ldots,p_1 \textrm{ and }M_1[i]=(\textrm{mult}^*(R_i)[1])[1] \textrm{ for } i=p_1+1,\ldots,m;$$
$$M_2[i]=\textrm{mult}(R_i)[2] \textrm{ for } i=1,\ldots,p_1 \textrm{ and }M_2[i]=(\textrm{mult}^*(R_i)[2])[2] \textrm{ for } i=p_1+1,\ldots,m.$$

\begin{oss}
	\label{R4}
	The multiplicity sequences $M_1$ and $M_2$ can be also found by using the algorithm of Arslan and Sahin to the algebroid branches given by the parametrizations 
	$$ R^1=\mathbb{K}[[ \phi_1(t),\ldots,\phi_n(t)]]  \textrm{ and } R^2= \mathbb{K}[[ \psi_1(u),\ldots,\psi_n(u)]].$$ 
\end{oss}

In the following image we have the multiplicity tree and the minimal tree of $R^{*}$.\\
\begin{center}
	\begin{tikzpicture}[grow'=up,sibling distance=13pt,scale=.70]
	\tikzset{level distance=50pt,every tree node/.style={draw,ellipse}} \Tree [ .$\textrm{mult}(R_1)$ [ .$\textrm{mult}(R_2)$ \edge[dashed]; [ .$\textrm{mult}(R_{p_1})$ [ [ .$\textrm{mult}^*(R_{p_1+1})[1]$ \edge[dashed]; [.$\textrm{mult}^*(R_{m})[1]$ [.$[1,0]$ ]]] [ .$\textrm{mult}^*(R_{p_1+1})[2]$ \edge[dashed]; [.$\textrm{mult}^*(R_{m})[2]$ [.$[0,1]$ ]]] ] ] ]]  \end{tikzpicture} 
	\hspace{2cm}
	\begin{tikzpicture}[grow'=up,sibling distance=13pt,scale=.70]
	\tikzset{level distance=50pt,every tree node/.style={draw,ellipse}} \Tree [ .$x_{R_1}$ [ .$x_{R_2}$ \edge[dashed]; [ .$x_{R_{p_1}}$ [ [ .$(x_{R_{p_1+1}}^1,1)$ \edge[dashed]; [.$(x_m^1,1)$ [.$(t,1)$ ]]] [ .$(1,x_{R_{p_1+1}}^2)$ \edge[dashed]; [.$(1,x_m^2)$ [.$(1,u)$ ]]] ] ] ]]  \end{tikzpicture} 
\end{center}

Notice that the algorithm computes all the tools needed to construct the previous two trees.
If the tree $T$ of $R^*$ is represented by the matrix $M(T)_E= \left(\begin{matrix}
0 & p_1 \\ 0 & 0
\end{matrix}\right)$ with $E=\left\{M_1,M_2\right\}$, the conductor of the associated Arf semigroup is $c=(c[1],c[2])$ with $$c[i]=\sum_{k=1}^{\max(l_i,p_1)}{M_i[k]},$$ where $l_i$ is the length of the multiplicity sequence $M_i$.\\
We have that $(t^{c[1]},u^{c[2]})\cdot \left(\mathbb{K}[t] \times \mathbb{K}[u] \right) \subseteq R^{*}$.
\begin{discussion}
\label{D1}
Now we want to find a method to compute the Arf closure through a presentation. In the previous section, we have seen  how to construct it recursively. In the two-branches case we have that:

\begin{align*}
&R_i^*=\K(1,1)+x_{R_i}R_{i+1}^* \hspace{0.5cm} \textrm{for } i=1,\ldots,p_1\\
&R_{p_1+1}^*=(R_{p_1+1}^1)^*\times(R_{p_1+1}^2)^*\\
&R_i^1=\K[[t]] \hspace{0.5cm} \textrm{for } i>\max\{l_1,p_1\}\\
&R_i^2=\K[[u]] \hspace{0.45cm} \textrm{for } i>\max\{l_2,p_1\}\\
\end{align*}
\vspace{0.5cm}
and 
$$\textrm{if }\max\{l_j,p_1\}>p_1 \hspace{0.5cm} (R_i^{j})^*=\K+x_{R_i}^j(R_{i+1}^{j})^*\hspace{0.7cm}\textrm{for } i=p_1+1,\ldots,\max(l_j,p_1);\hspace{0.1cm}j=1,2$$
If we denote by $d_j=\max(l_j,p_1)$, by substituting the expression in the reverse order we have that:

{\small
	\begin{align*}
	&(R_{d_1}^{1})^*=\K+x_{R_{d_1}}^1\K[[t]];\hspace{3.5cm} (R_{d_2}^{2})^*=\K+x_{R_{d_2}}^2\K[[u]]; \\
	&(R_{d_1-1}^{1})^*=\K+x_{R_{d_1-1}}^1\K+x_{R_{d_1-1}}^1x_{R_{d_1}}^1\K[[t]];\hspace{0.65cm} 
	(R_{d_2-1}^{2})^*=\K+x_{R_{d_2-1}}^2\K+x_{R_{d_2-1}}^2x_{R_{d_2}}^2\K[[u]];\\
	&\ldots\ldots\ldots\ldots\ldots\ldots\ldots\ldots\\
	&\ldots\ldots\ldots\ldots\ldots\ldots\ldots\ldots\\
	&(R_{p_1+1}^{1})^*=\K+x_{R_{p_1+1}}^1\K+x_{R_{p_1+1}}^1x_{R_{p_1+2}}^1\K+\ldots+x_{R_{p_1+1}}^1x_{R_{p_1+2}}^1\ldots x_{R_{d_1}}^1\K[[t]];\\
	&(R_{p_1+1}^{2})^*=\K+x_{R_{p_1+1}}^2\K+x_{R_{p_1+1}}^2x_{R_{p_1+2}}^2\K+\ldots+x_{R_{p_1+1}}^2x_{R_{p_1+2}}^2\ldots x_{R_{d_2}}^2\K[[u]];
	\end{align*}
}
and
{\small
	\begin{align*}
	&R_{p_1+1}^*=(R_{p_1+1}^{1})^*\times (R_{p_1+1}^{2})^*  \\
	&R_{p_1}^*=\K(1,1)+ x_{R_{p_1}}((R_{p_1+1}^{1})^*\times (R_{p_1+1}^{2})^*)\\
	&\ldots\ldots\ldots\ldots\ldots\ldots\ldots\ldots\\
	&R^*=\K(1,1)+ x_{R_1}\K+\ldots+ x_{R_{p_1}}x_{R_{p_1-1}}\ldots x_{R_1}((R_{p_1+1}^{1})^*\times (R_{p_1+1}^{2})^*)
	\end{align*}
}
Finally, comparing last two relations, we obtain
{\small
	\begin{align*}
	&R^*=\K(1,1)+ x_{R_1}\K+\ldots+\\
	&+x_{R_{p_1}}x_{R_{p_1-1}}\ldots x_{R_1}\left[(\K+\ldots+x_{R_{p_1+1}}^1\ldots x_{R_{d_1}}^1\K[[t]])\times(\K+\ldots+x_{R_{p_1+1}}^2\ldots x_{R_{d_2}}^2\K[[u]])\right].
	\end{align*}
}
Developing the Cartesian product, we find:
{\small
	\begin{align*}
	&R^*=\K(1,1)+ x_{R_1}\K +\dots+x_{R_{p_1}}\ldots x_{R_1} \K+ x_{R_{p_1}}\ldots x_{R_1}(1,x_{R_{p_1+1}}^2)\K+ \ldots +\\ &+x_{R_{p_1}}\dots x_{R_1}(1,x_{R_{p_1+1}}^2\dots x_{R_{d_2}}^2)(\K \times \K[[u]])
	+x_{R_{p_1}}\ldots x_{R_1}(x_{R_{p_1+1}}^1,1)\K+\ldots+ \\ &+x_{R_{p_1}}\dots x_{R_1}(x_{R_{p_1+1}}^1,x_{R_{p_1+1}}^2\dots x_{R_{d_2}}^2)(\K\times \K[[u]])+ \ldots +\\
	&+ x_{R_{p_1}}\dots x_{R_1} (x_{R_{p_1+1}}^1\ldots x_{R_{d_1}}^1,1)(\K[[t]]\times \K)+\ldots+(t^{c[1]},u^{c[2]})\cdot(\K[[t]]\times\K[[u]]),
	\end{align*}}
because $$x_{R_{p_1}}\dots x_{R_1}(x_{R_{p_1+1}}^1\ldots x_{R_{d_1}}^1,x_{R_{p_1+1}}^2\dots x_{R_{d_2}}^2)\cdot(\K[[t]]\times\K[[u]])=(t^{c[1]},u^{c[2]})\cdot(\K[[t]]\times\K[[u]]).$$
Notice that the elements with valuation greater than the conductor can be erased.
We observe that the elements in the expression have all different valuation and each of them has valuation corresponding to an element in $\nu(R)$ that is not greater than the conductor.\\
The elements with valuation not smaller than the conductor have to belong to the set $(t^{c[1]},y)\cdot(\K[[t]]\times\K)$ with $ord(y)<c[2]$ or $(z,u^{c[2]})\cdot(\K\times\K[[u]] )$ with $ord(z)<c[1]$.\\
Each element of the set $(t^{c[1]},y)\cdot(\K[[t]]\times\K)$ can be written as a sum of an element in $(0,y)\K$ and an element of $(t^{c[1]},u^{c[2]})\cdot(\K[[t]]\times\K[[u]])$. Similarly each element of the set $(z,u^{c[2]})\cdot(\K\times\K[[u]])$ can be written as a sum of an element in $(z,0)\K$ and an element of $(t^{c[1]},u^{c[2]})\cdot(\K[[t]]\times\K[[u]])$.\\
If we define
$$Y^0=\{(y,z)\in R^*: v((y,z))<c\}$$
$$Y^1=\{(0,y)\in R^*: ord(y)<c[2]\}$$
$$Y^2=\{(z,0)\in R^*: ord(z)<c[1]\}$$
$$Y:=Y_0\cup Y_1 \cup Y_2,$$
we have a presentation of the type
$$R^{*}=\mathbb{K}(1,1)+\mathbb{K}y_1+\dots+\mathbb{K}y_k+(t^{c[1]},u^{c[2]})\cdot \left( \mathbb{K}[[t]] \times \mathbb{K}[[u]] \right), $$
where the elements $y_i$ belong to $Y$ and we have one and only one representative for each valuation not greater than the conductor. Now we will show how to compute these elements from our algorithm output.

We define small elements of a good semigroup $S$ the elements of the semigroup that are smaller than  or equal to the conductor.
Thus from the properties of the multiplicity tree of an Arf semigroup, it follows that an element $v$ of $\textrm{Small}(\nu(R^*))$ can be obtained as the sum of the nodes of a subtree of $T(R)$ rooted in $\textrm{mult}(R)$ and contained in the subtree that gives the conductor.\\
Then it is easy to find an element $y$ with valuation $v$. It suffices to consider the corresponding subtree in the minimal tree of $R^{*}$ and multiply all its nodes.
We suppose that $s_1,\ldots,s_k$ are the elements of $R^{*}$ such that
$$ \left\{\nu(s_1),\ldots,\nu(s_k),c \right\}=\textrm{Small}(\nu(R^*)), $$
if we consider the elements  $\overline{s_1},\ldots,\overline{s_k}$, obtained by truncating the monomials of degree bigger that the corresponding component of the  conductor, it is easy to see that they are the elements $y_i$ that we were searching.

\end{discussion}

\begin{ex}
	\label{E1}
	Consider $$ R=R_1=\mathbb{K}[[(t^5+t^{10},u^7),(t^8,u^{11}+u^{13})]].$$
	
	We have $\textrm{mult}(R_1)=(5,7)$. We can choose $x_{R_1}=(t^5+t^{10},u^7)$ as an element of minimal value in $R_1$. Therefore we have
	$$R_2=\mathbb{K}\left[\left[x_{R_1}=(t^5+t^{10},u^7),\frac{(t^8,u^{11}+u^{13})}{x_{R_1}}\right]\right]=\mathbb{K}\left[\left[(t^5+t^{10},u^7),\left(\frac{t^3}{1+t^5},u^4+u^6\right)\right]\right].$$
	
	$R_2$ is still local and we have $\textrm{mult}(R_2)=(3,4)$. We can choose $x_{R_2}=\left(\frac{t^3}{1+t^5},u^4+u^6\right)$. Thus we have 
	$$R_3=\mathbb{K}\left[\left[x_{R_2}=\left(\frac{t^3}{1+t^5},u^4+u^6\right),\frac{(t^5+t^{10},u^7)}{x_{R_2}}\right]\right]=$$ $$=\mathbb{K}\left[\left[\left(\frac{t^3}{1+t^5},u^4+u^6\right),\left(t^2(1+t^5)^2,\frac{u^3}{1+u^2}\right)\right]\right].$$
	
	$R_3$ is still local and we have $\textrm{mult}(R_3)=(2,3)$. We can choose $x_{R_3}=\left(t^2(1+t^5)^2,\frac{u^3}{1+u^2}\right)$. Thus we have 
	$$ R_4=\mathbb{K}\left[\left[x_{R_3}=\left(t^2(1+t^5)^2,\frac{u^3}{1+u^2}\right), \left( \frac{t}{(1+t^5)^3},u(1+u^2)^2\right)\right]\right].$$
	$R_4$ is still local and we have $\textrm{mult}(R_4)=(1,1)$. We can choose $x_{R_4}= \left( \frac{t}{(1+t^5)^3},u(1+u^2)^2\right)$. Thus we have 
	$$ R_5=\mathbb{K}\left[\left[x_{R_4}= \left( \frac{t}{(1+t^5)^3},u(1+u^2)^2\right), \left( t(1+t^5)^5,\frac{u^2}{(1+u^2)^3}\right)\right]\right].$$
	$R_5$ is still local and we have $\textrm{mult}(R_5)=(1,1)$. We can choose again $x_{R_5}= \left( \frac{t}{(1+t^5)^3},u(1+u^2)^2\right)$. Thus we have 
	$$ R_6=\mathbb{K}\left[\left[x_{R_5}= \left( \frac{t}{(1+t^5)^3},u(1+u^2)^2\right), \left((1+t^5)^8,\frac{u}{(1+u^2)^5}\right)\right]\right].$$
	This time, for the Lemma \ref{L1}, we have that $R_6$ is not local because we have the element $ \left((1+t^5)^8,\frac{u}{(1+u^2)^5}\right)$ with valuation $(0,1)$. We can write:
	$$ R_6=\mathbb{K}\left[\left[\frac{t}{(1+t^5)^3},(1+t^5)^8\right]\right] \times \mathbb{K}\left[\left[u(1+u^2)^2,\frac{u}{(1+u^2)^5}\right]\right]=K[[t]] \times K[[u]].$$
	Thus we have $\textrm{mult}^*(R_6)=\left\{ (1,0),(0,1)\right\}$, and we can stop the algorithm.
	Then the multiplicity tree of $R^{*}$ and the minimal tree are:
	
	\begin{center}
		\begin{tikzpicture}[grow'=up,sibling distance=13pt]
		\tikzset{level distance=50pt,every tree node/.style={draw,ellipse}} \Tree [ .$[5,7]$ [ .$[3,4]$ [ .$[2,3]$ [ .$[1,1]$ [.$[1,1]$ [.$[1,0]$ ] [.$[0,1]$ ] ]]]]]  \end{tikzpicture} 
		\hspace{2cm}
		\begin{tikzpicture}[grow'=up,sibling distance=13pt]
		\tikzset{level distance=50pt,every tree node/.style={draw,ellipse}} \Tree [ .$\left(t^5+t^{10},u^7\right)$ [ .$\left(\frac{t^3}{1+t^5},u^4+u^6\right)$ [ .$\left(t^2(1+t^5)^2,\frac{u^3}{1+u^2}\right)$ [ .$\left(\frac{t}{(1+t^5)^3},u(1+u^2)^2\right)$ [.$\left(\frac{t}{(1+t^5)^3},u(1+u^2)^2\right)$ [.$(t,1)$ ] [.$(1,u)$ ] ]]]]]  \end{tikzpicture}  
	\end{center}

	The multiplicity tree $T$ is  $M(T)_E= \left(\begin{matrix}
	0 & 5 \\ 0 & 0
	\end{matrix}\right)$ where $E=\left\{M_1=[5,3,2], M_2=[7,4,3] \right\}$. We can easily see that conductor $c$ of $\nu(R^*)$ is $c=(12,16)$. We can also compute $\textrm{Small}(\nu(R^*))$ finding that	
	$$ \textrm{Small}(\nu(R^*))=\left\{ (5,7), (8,11 ), (10,14), (11,15), (12, 16) \right\}.$$ 
	Considering the expression of the elements of $\textrm{Small}(\nu(R^*))$ as a sum of nodes in a subtree of $T$ we can produce the following elements of $R^{*}$ as product of the corresponding nodes on  the minimal tree of $R^{*}$:
	$$ \left\{ (t^5+t^{10},u^7), (t^8,u^{11}+u^{13}), (t^{10}(1+t^5)^2,u^{14}), \left(\frac{t^{11}}{1+t^5},u^{15}(1+u^2)^2\right), (t^{12},u^{16}) \right\}.$$
	
	Finally we have $$ R^{*}=\mathbb{K}(1,1)+\mathbb{K}  (t^5+t^{10},u^7)+\mathbb{K}(t^8,u^{11}+u^{13})+\mathbb{K}(t^{10}(1+t^5)^2,u^{14})+\mathbb{K}\left(\frac{t^{11}}{1+t^5},u^{15}(1+u^2)^2\right)+$$ $$+ (t^{12},u^{16}) \left(\mathbb{K}[[t]] \times \mathbb{K}[[u]] \right)=\mathbb{K}(1,1)+\mathbb{K}  (t^5+t^{10},u^7)+\mathbb{K}(t^8,u^{11}+u^{13})+\mathbb{K}(t^{10},u^{14})+\mathbb{K}\left(t^{11},u^{15}\right)+$$ $$+ (t^{12},u^{16}) \left(\mathbb{K}[[t]] \times \mathbb{K}[[u]] \right).$$
	Notice that the fact that we know the conductor of $R^{*}$ allows us to simplify some of the elements corresponding to the small elements by truncating the terms that have order greater than the conductor.
	
\end{ex}

\section{The algorithm in the general case}
\label{section3}
In this section we explain how to generalize the algorithm presented in the previous one to algebroid curve with more than two branches.
First of all we fix the notations.
We want to find the Arf closure of the ring $R \subseteq \mathbb{K}[[t_1]]\times \dots \times \mathbb{K}[[t_n]]$ with the following parametrization
$$ R=R_1=\mathbb{K}[[ \left( \phi_{11}(t_1),\ldots,\phi_{1n}(t_n)\right),\ldots, \left( \phi_{k1}(t_1),\ldots,\phi_{kn}(t_n)\right)]].$$
Similarly to the previous section, we will always replace an element of the parametrization $y=\left( \phi_{j1}(t_1),\ldots,\phi_{jn}(t_n)\right)$ such that $$ \textrm{ord}(\phi_{ji}(t_i))=0 \textrm{ and with } \phi_{j1}(0)=\phi_{ji}(0)\textrm{ for all } i=1,\ldots,n,    $$
with the element $\overline{y}=y-\phi_{j1}(0)\cdot (1,\ldots,1)$.

To compute the Arf closure $R^{*}$ we have to find the sequence of blow-ups $R_m$ of $R$. We will give an inductive algorithm for the computation of $R_m$.

We will denote by 
$$ R_m=\mathbb{K}\left[\left[ \left( \phi^{(m)}_{11}(t_1),\ldots,\phi^{(m)}_{1n}(t_n)\right),\ldots, \left( \phi^{(m)}_{k(m)1}(t_1),\ldots,\phi^{(m)}_{k(m)n}(t_n)\right)\right]\right].$$
If $i,j \in \left\{ 1,\ldots,n\right\}$ with $i\neq j $ we denote by $\pi_{i,j}$ the projection 
$$ \pi_{i,j}:  \mathbb{K}[[t_1]]\times \dots \times \mathbb{K}[[t_n]]  \to   \mathbb{K}[[t_i]]\times \mathbb{K}[[t_j]].$$
We have the following obvious Lemma:
\begin{lem}
	\label{L2}
	Consider $S \subseteq \mathbb{K}[[t_1]]\times \dots \times \mathbb{K}[[t_n]]$. We define the equivalence relation $\sim$ on $\left\{1,\ldots,n\right\}$, such that $i \sim j$ if $i=j$ or if $\pi_{i,j}(S)$ is local in $\mathbb{K}[[t_i]]\times \mathbb{K}[[t_j]]$.
	Then the partition $\mathfrak{P}(S)$,  defined in the first section, is the partition of $\left\{1,\ldots,n\right\}$ into equivalence classes with respect to $\sim$.
\end{lem}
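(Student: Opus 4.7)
The plan is to show, for indices $i\neq j$, that $i\sim j$ if and only if $i$ and $j$ belong to a common block $P_l$ of $\mathfrak{P}(S)$; this simultaneously verifies that $\sim$ is an equivalence relation and identifies its classes with the blocks of $\mathfrak{P}(S)$.

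For the ``if'' direction, suppose $i,j\in P_l$. Because $S=S_1\times\cdots\times S_t$ and each factor $S_k$ with $k\neq l$ is contained in $\prod_{q\in P_k}\mathbb{K}[[t_q]]$, it contributes $0$ at both the $i$-th and the $j$-th coordinates; hence $\pi_{i,j}(S)=\pi_{i,j}(S_l)$. The key is then to exploit locality of $S_l$: I would argue that every $s\in S_l$ satisfies $s_q(0)=s_{q'}(0)$ for all $q,q'\in P_l$. Otherwise the evaluation-at-zero map $S_l\to \mathbb{K}^{P_l}$ would have image strictly containing the diagonal, producing a proper non-trivial idempotent of $S_l$ and contradicting its locality. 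Consequently every element of $\pi_{i,j}(S_l)$ has equal constant terms on its two coordinates, so neither condition of Lemma \ref{L1} can be met, and $\pi_{i,j}(S_l)$ is local by that lemma.

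For the ``only if'' direction I would argue by contraposition: assume $i\in P_l$ and $j\in P_m$ with $l\neq m$. The product decomposition furnishes an idempotent $e_l\in S$ that equals $1$ on the $P_l$-coordinates and $0$ elsewhere. Since the integral closure of $S_m$ is $\prod_{q\in P_m}\mathbb{K}[[t_q]]$, a sufficiently large power of $t_j$ lies in $S_m$, so there exists $s_m\in S_m$ whose $j$-th component is a non-zero power series of positive order. Viewing $s_m$ as an element of $S$ through the inclusion $S_m\hookrightarrow S$ and forming $y=e_l+s_m\in S$, the image $\pi_{i,j}(y)=(1,\,s_m|_j)$ has orders $(0,\textrm{ord}(s_m|_j))$ with $\textrm{ord}(s_m|_j)>0$. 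This satisfies the first condition of Lemma \ref{L1}, so $\pi_{i,j}(S)$ is not local and $i\not\sim j$.

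The main obstacle is the ``no non-trivial idempotent'' ingredient invoked in the first direction: one must verify that a local subring $S_l\subseteq\prod_{q\in P_l}\mathbb{K}[[t_q]]$ containing $\mathbb{K}$ diagonally cannot, under evaluation at $0$, surject onto a non-diagonal $\mathbb{K}$-subalgebra of $\mathbb{K}^{P_l}$. The cleanest justification is a standard lifting-of-idempotents argument that uses the completeness of the ambient ring together with the fact that $S_l$ sits inside it as a finite (hence Henselian) extension. Once this point is granted, both directions reduce to direct applications of Lemma \ref{L1} to the two-coordinate projection.
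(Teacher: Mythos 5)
The paper offers no argument for this lemma at all; it is labeled ``obvious'' and used directly, so there is nothing to compare your proof against step by step. Your overall structure is sound: reduce each direction to the two-branch projection and apply Lemma~\ref{L1}. The ``only if'' direction is correct, though one could shortcut it by observing that $\pi_{i,j}(e_l)=(1,0)$ is itself a non-trivial idempotent of $\pi_{i,j}(S)$, which already rules out locality without invoking Lemma~\ref{L1} or the conductor of $S_m$.

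The one place where your argument is heavier than it needs to be, and where you yourself flag uncertainty, is the claim that any $s\in S_l$ must have $s_q(0)=s_{q'}(0)$ for all $q,q'\in P_l$. Henselian lifting of idempotents is not required here. First, the elementary route: if $s\in S_l$ had $s_q(0)=a\neq b=s_{q'}(0)$, then $s-a\cdot 1$ and $b\cdot 1-s$ both lie in $S_l$, and each is a non-unit of $S_l$ because each has constant term $0$ in at least one coordinate (hence is already a non-unit of $\prod_{q\in P_l}\mathbb{K}[[t_q]]$, so certainly a non-unit of the subring $S_l$); yet their sum is the non-zero scalar $b-a$, a unit. Two non-units summing to a unit contradicts locality of $S_l$. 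Second, even if one insists on idempotents: the kernel of the evaluation map $S_l\to\mathbb{K}^{P_l}$ is contained in the maximal ideal of $S_l$, so the image $T\cong S_l/\ker$ is a quotient of a local ring and is therefore itself local; a local ring has no non-trivial idempotents, so $T$ cannot strictly contain the diagonal. No completeness, finiteness, or lifting is needed, and the hypothesis you introduce to justify the lifting is extraneous. With that step replaced, your proof is a clean and complete justification of what the paper simply asserts.
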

If $$ S=\mathbb{K}[[ \left( \phi_{11}(t_1),\ldots,\phi_{1n}(t_n)\right),\ldots, \left( \phi_{k1}(t_1),\ldots,\phi_{kn}(t_n)\right)]],$$
then $$\pi_{i,j}(S)=\mathbb{K}[[ \left( \phi_{1i}(t_i),\phi_{1j}(t_j)\right),\ldots, \left( \phi_{ki}(t_i),\phi_{kj}(t_j)\right)]];$$
since in the two branches case we know how to understand if a ring is local from its parametrization, we have the following algorithm to compute $\mathfrak{P}(S)$:

\begin{algorithm}
	\SetKwData{Left}{left}
	\SetKwData{This}{this}
	\SetKwData{Up}{up}
	\SetKwFunction{Union}{Union}
	\SetKwFunction{FindCompress}{FindCompress}
	\SetKwInOut{Input}{input}
	\SetKwInOut{Output}{output}
	
	\caption{}
	\Input{$  S=\mathbb{K}[[ \left( \phi_{11}(t_1),\ldots,\phi_{1n}(t_n)\right),\ldots, \left( \phi_{k1}(t_1),\ldots,\phi_{kn}(t_n)\right)]]$}
	\Output{The partition $\mathfrak{P}(S)$}
	\BlankLine
	$ N \longleftarrow \left\{ 1,\ldots,n\right\}$

	\For{$ i \in N$}{$ P_i \longleftarrow \left\{i\right\}$ 
		
		\For{ $j \in N_{>i}$} { \If {$\pi_{i,j}(S)$ is local} { $ P_i \longleftarrow P_i \cup \left\{j \right\}$
				
				$N  \longleftarrow N \setminus \left\{ j\right\}$}}}

	\Return $\mathfrak{P}(S)=\left\{P_1,P_{i_2},\ldots,P_{i_t}\right\}$
	\label{algo_disjdecomp}
	
\end{algorithm}
\newpage
Once we know  that $\mathfrak{P}(S)=\left\{ P_1,\ldots,P_t\right\}$, with $$ P_i=\left\{ q_{i,1},\ldots, q_{i,k(i)} \right\},$$
we have $S=\prod_{i=1}^{t}{S(P_i)}$, where
$$ S(P_i)=\mathbb{K}\left[\left[ \left( \phi_{1q_{i,1}}(t_{q_{i,1}}),\ldots,\phi_{1q_{i,k(i)}}(t_{q_{i,k(i)}})\right),\ldots, \left( \phi_{kq_{i,1}}(t_{q_{i,1}}),\ldots,\phi_{kq_{i,k(i)}}(t_{q_{i,k(i)}})\right)\right]\right].$$
Now we can give an algorithm for computing the blow-up sequence of $R$. We will do it by working on induction on the number $n$ of branches.
We need to show a procedure to compute $R_{m+1}$ from $R_m$.

\begin{itemize}
	\item Base: $n=2$.
	
	For $n=2$ we have already seen, in the previous section, how to compute the $R_m$.
	
	\item Inductive step.
	
	We suppose that we are able to solve the problem for rings with less than $n$ branches and we give a procedure for rings with exactly $n$ branches.
	
	We have two cases:

	If $R_m$ is local we denote by $x_{R_m}$ an element of $R_m$ such that $\nu(x_{R_m})=\textrm{mult}(R_m)$ (we can find it as a linear combinations of the elements of the parametrization of $R_m$).
	
	Then we know that 
	
	$$ R_{m+1}=\mathbb{K}\left[\left[x_{R_m}, \frac{\left( \phi^{(m)}_{11}(t_1),\ldots,\phi^{(m)}_{1n}(t_n)\right)}{x_{R_m}},\ldots, \frac{\left( \phi^{(m)}_{k(m)1}(t_1),\ldots,\phi^{(m)}_{k(m)n}(t_n)\right)}{x_{R_m}}\right]\right].$$
	
	If $R_m$ is not local then we have that there exist a partition $\mathfrak{P}(R_m)=\left\{P_1,\ldots,P_t \right\}$ such that
	$$ R_m=\prod_{i=1}^{t} R_m(P_i). $$
	Notice that the $R_m(P_i)$ can be computed from the parametrization of $R_m$ and they are local rings with less then $n$ branches. Then for the inductive step we know how to compute the blow-up $\textrm{Bl}(R_m(P_i))$ of $R_m(P_i)$ and we have that:
	
	$$ R_{m+1}= \prod_{i=1}^{t} \textrm{Bl}(R_m(P_i)).$$
	
\end{itemize}

\begin{oss}
	\label{R5}
	It is clear that, with our definitions, we have $$S= \mathbb{K}[[t_1]]\times \dots \times \mathbb{K}[[t_n]] \iff \textrm{mult}^*(S)=\left\{ (1,0,\ldots,0), (0,1,0,\ldots,0),\ldots,(0,\ldots,0,1) \right\}.$$ \end{oss}
So we have a procedure to find the first $N$ such that $R_N=\mathbb{K}[[t_1]]\times \dots \times \mathbb{K}[[t_n]]$. From this procedure we can find the sequence 

$$ \textrm{mult}^{*}(R_1), \textrm{mult}^{*}(R_2), \ldots, \textrm{mult}^{*}(R_N),$$
from which we can build the multiplicity tree of $R^{*}$ up to level $N$. 
Once we know the multiplicity tree $T$ and the minimal tree we are able to give an expression for the Arf closure $R^{*}$ using the strategy presented in the previous section. In fact we can compute the conductor $c$ of the semigroup of values of the Arf closure and then  use the correspondence between the small elements of the Arf semigroup $\nu(R^*)$ and the elements of $R^{*}$ to find $\left\{ s_1,\ldots, s_l=c \right\} \subseteq R^{*}$ such that:
$$ R^{*}=\mathbb{K}(1,\ldots,1)+\mathbb{K}s_1+\ldots+\mathbb{K}s_{l-1}+(t_1^{c[1]},\ldots,t_n^{c[n]})  \left(\mathbb{K}[[t_1]]\times \dots \times \mathbb{K}[[t_n]]\right).$$

\begin{ex}
	\label{E2}
	
	We want to compute the Arf closure of the following ring 
	$$ R=R_1=\mathbb{K}[[(t^5-t^8,u^2+u^6,v^3,w^2+w^9),(t^{6},u^2+u^7+u^{10},v^{7}-v^9,w^2+w^7)]].$$
	In order to simplify the notation we will set $R_i^j=R_i(P_j)$, $x_i^j=x_{R_i}^j$ and we denote by $x_{R_i^j}$ the element of minimal valuation  in the local ring $R_i^j$.
	
	It is easy to verify that $\pi_{1,2}(R),\pi_{1,3}(R)$ and $\pi_{1,4}(R)$ are all  local. Then for the Lemma \ref{L2} follows that $\mathfrak{P}(R)=\left\{ \left\{1,2,3,4\right\} \right\}$, therefore $R$ is local.
	
	We have that $\textrm{mult}(R_1)=(5,2,3,2)$. As the minimal element $x_{R_1}$ we can choose $x_{R_1}=(t^5-t^8,u^2+u^6,v^3,w^2+w^9)$.
	
	We have:
	$$ R_2=\mathbb{K}\left[ \left[x_{R_1}=(t^5-t^8,u^2+u^6,v^3,w^2+w^9),\frac{(t^{6},u^2+u^7+u^{10},v^{7}-v^9,w^2+w^7)}{x_{R_1}} \right]\right]=$$ $$=
	\mathbb{K}\left[\left[ (t^5-t^8,u^2+u^6,v^3,w^2+w^9), \left( \frac{t}{1-t^3}, \frac{1+u^5+u^8}{1+u^4},v^4-v^6,\frac{1+w^5}{1+w^7}\right) \right]\right]. $$
	
	Now we can verify that $\pi_{1,2}(R_2)$ is not local, $\pi_{1,3}(R_2)$ is local, $\pi_{1,4}(R_2)$ is not local and $\pi_{2,4}(R_2)$ is local, therefore $\mathfrak{P}(R_2)=\left\{P_1=\left\{1,3\right\}, P_2=\left\{2,4 \right\}\right\}$.
	We have 
	$$ R_2=R_2^1 \times R_2^2,$$
	where $$ R_2^1=\mathbb{K}\left[\left[ (t^5-t^8,v^3), \left( \frac{t}{1-t^3},v^4-v^6 \right) \right]\right],$$
	$$ R_2^2=\mathbb{K}\left[\left[ (u^2+u^6,w^2+w^9),\left(\frac{1+u^5+u^8}{1+u^4},\frac{1+w^5}{1+w^7}\right)\right]\right]=$$ $$=\mathbb{K}\left[\left[ (u^2+u^6,w^2+w^9),\left(\frac{-u^4+u^5+u^8}{1+u^4},\frac{w^5-w^7}{1+w^7}\right)\right]\right]$$
	where, following our conventions on the parametrization, we  replace $\left(\frac{1+u^5+u^8}{1+u^4},\frac{1+w^5}{1+w^7}\right)$ with  $\left(\frac{1+u^5+u^8}{1+u^4},\frac{1+w^5}{1+w^7}\right)-(1,1)=\left(\frac{-u^4+u^5+u^8}{1+u^4},\frac{w^5-w^7}{1+w^7}\right)$.

	We have $\textrm{mult}(R_2^1)=(1,3)$ and we can choose as element of minimal value the sum $x_{R_2^1}$ of its two generators $$x_{R_2^1}=\left( \frac{t+t^5(1-t^3)^2}{1-t^3},v^3+v^4-v^6\right),$$ while $\textrm{mult}(R_2^2)=(2,2)$ and we can choose as element of minimal value \\ $ \displaystyle x_{R_2^2}= \left(u^2+u^6,w^2+w^9\right)$.
	Then we have $\textrm{mult}^{*}(R_2)=\left\{ (1,0,3,0), (0,2,0,2) \right\}$ and we can proceed with the computation of $R_3$.
	Thus
	$$R_3=\textrm{Bl}(R_2^1) \times \textrm{Bl}(R_2^2) ,$$
	so we have to compute $\textrm{Bl}(R_2^1)$ and $\textrm{Bl}(R_2^2).$

	We have
	$$\textrm{Bl}(R_2^1)=\mathbb{K}\left[\left[\left(\phi_1^{(3)}(t),\psi_1^{(3)}(v)\right),\ldots, \left(\phi_{3}^{(3)}(t),\psi_{3}^{(3)}(v)\right)\right]\right],$$
	
	where
	\begin{itemize}
		
		\item $ \displaystyle \left(\phi_1^{(3)}(t),\psi_1^{(3)}(v)\right)=\left( \frac{t+t^5(1-t^3)^2}{1-t^3},v^3+v^4-v^6\right);$
		
		\item  $ \displaystyle \left(\phi_2^{(3)}(t),\psi_2^{(3)}(v)\right)=\left(\frac{t^4(1-t^3)^2}{1+t^4(1-t^3)^2},\frac{1}{1+v-v^3}\right);$
		\item  $ \displaystyle \left(\phi_{3}^{(3)}(t),\psi_{3}^{(3)}(v)\right)=\left(\frac{1}{1+t^4(1-t^3)^2},\frac{v-v^3}{1+v-v^3}\right).$
	\end{itemize}
	We notice that the second generator has valuation $(4,0)$, then $\textrm{Bl}(R_2^1)$ is not local in $\mathbb{K}[[t]] \times \mathbb{K}[[v]]$. Furthermore we have, with our notation, that $\textrm{mult}^*(\textrm{Bl}(R_2^1))=\left\{(1,0),(0,1)\right\}$.
	Then we have 
	
	$$ \textrm{Bl}(R_2^1)= \mathbb{K}[[t]] \times \mathbb{K}[[v]].$$
	Now we can compute $\textrm{Bl}(R_2^2) $. We have
	$$\textrm{Bl}(R_2^2)=\mathbb{K}\left[\left[\left(u^2+u^6,w^2+w^9\right),\left( \frac{-u^2+u^3+u^6}{(1+u^4)^2},\frac{w^3-w^5}{(1+w^7)^2}\right)\right]\right].$$
	Then we have that $\textrm{Bl}(R_2^2)$ is local in $ \mathbb{K}[[u]] \times \mathbb{K}[w]]$ and $\textrm{mult}(\textrm{Bl}(R_2^2))=(2,2)$. 
	Then $\mathfrak{P}(R_3)=\left\{ P_1=\left\{1\right\}, P_2=\left\{3\right\}, P_3=\left\{2,4 \right\}\right\} $ and
	$$ R_3=R_3^1\times R_3^2\times R_3^3=\mathbb{K}[[t]]\times\mathbb{K}[[v]] \times \textrm{Bl}(R_2^2),$$
	with $\textrm{mult}^{*}(R_3)=\left\{ (1,0,0,0), (0,0,1,0) ,(0,2,0,2)\right\}$.
	As a minimal element of $R_3^3$ we can choose again $ x_{R_3^3}=\left(u^2+u^6,w^2+w^9\right)$.
	Thus $$R_4=\textrm{Bl}(\mathbb{K}[[t]]) \times \textrm{Bl}(\mathbb{K}[[v]]) \times \textrm{Bl}(R_3^3)=\mathbb{K}[[t]] \times \mathbb{K}[[v]] \times \textrm{Bl}(R_3^3).$$
	We have:
	$$ \textrm{Bl}(R_3^3)=\mathbb{K}\left[\left[\left(u^2+u^6,w^2+w^9\right),\left( \frac{-1+u+u^4}{(1+u^4)^3},\frac{w-w^3}{(1+w^7)^3}\right)\right]\right].$$
	From this it is easy to show that $\textrm{Bl}(R_3^3)= \mathbb{K}[[u]] \times  \mathbb{K}[[w]]$. 
	
\noindent	Then $\mathfrak{P}(R_4)=\left\{ P_1=\left\{1\right\}, P_2=\left\{2\right\}, P_3=\left\{3 \right\},P_4=\left\{4 \right\}\right\} $ and
	$$R_4=\mathbb{K}[[t]] \times  \mathbb{K}[[u]] \times \mathbb{K}[[v]] \times  \mathbb{K}[[w]],$$
	and we have reached the stop condition for our algorithm.
	We found that $N=4$ and
	\begin{itemize}
		\item $ \textrm{mult}^{*}(R_1)=\left\{ (5,2,3,2) \right\},$  \item $\textrm{mult}^{*}(R_2)=\left\{ (1,0,3,0), (0,2,0,2) \right\}, $ \item $\textrm{mult}^{*}(R_3)=\left\{ (1,0,0,0), (0,0,1,0) ,(0,2,0,2)\right\}, $ \item $\textrm{mult}^{*}(R_4)=\left\{ (1,0,0,0), (0,1,0,0) ,(0,0,1,0),(0,0,0,1)\right\}.  $
		
	\end{itemize}
	The corresponding minimal elements are:
	
	\begin{itemize}
		\item $x_{R_1}=(t^5-t^8,u^2+u^6,v^3,w^2+w^9),$  \item $x_2^{1}=\left( \frac{t+t^5(1-t^3)^2}{1-t^3},1,v^3+v^4-v^6,1\right)$ and $x_2^2= \left(1,u^2+u^6,1,w^2+w^9\right);$\item $ x_3^1=(t,1,1,1), x_3^2=(1,1,v,1)$ and $ x_3^3= \left(1,u^2+u^6,1,w^2+w^9\right);$ \item $x_4^1=(t,1,1,1), x_4^2=(1,u,1,1), x_4^3=(1,1,v,1)$ and $x_4^4=(1,1,1,w)$.
		
	\end{itemize}
	Then we have the following trees:
	
	\begin{center}
		\begin{tikzpicture}[grow'=up,sibling distance=13pt,scale=.65]
		\tikzset{level distance=50pt,every tree node/.style={draw,ellipse}} \Tree [ .$R_1$ [ .$R_2^1$ [ .$\K[[t]]=R_3^1$ ] [ .$\K[[v]]=R_3^2$         
		]  ] [.$R_2^2$ [ .$R_3^3$ [ .$\K[[u]]$ ] [ .$\K[[w]]$ ] ] ] ]  \end{tikzpicture} 
		\begin{tikzpicture}[grow'=up,sibling distance=13pt,scale=.65]
		\tikzset{level distance=50pt,every tree node/.style={draw,ellipse}} \Tree [ .$(5,2,3,2)$ [ .$(1,0,3,0)$ [ .$(1,0,0,0)$ ] [ .$(0,0,1,0)$         
		]  ] [.$(0,2,0,2)$ [ .$(0,2,0,2)$ [ .$(0,1,0,0)$ ] [ .$(0,0,0,1)$ ] ] ] ]  \end{tikzpicture} \end{center}
	\vspace{1cm}
	\begin{center}
		\begin{tikzpicture}[grow'=up,sibling distance=13pt,scale=.87]
		\tikzset{level distance=50pt,every tree node/.style={draw,ellipse}} \Tree [ .$(t^5-t^8,u^2+u^6,v^3,w^2+w^9)$ [ .{$\left(\frac{t+t^5(1-t^3)^2}{1-t^3},1,v^3+v^4-v^6,1\right)$} [ .$(t,1,1,1)$ ] [ .$(1,1,v,1)$         
		]  ] [.$(1,u^2+u^6,1,w^2+w^9)$ [ .$(1,u^2+u^6,1,w^2+w^9)$ [ .$(1,u,1,1)$ ] [ .$(1,1,1,w)$ ] ] ] ]  \end{tikzpicture} \end{center}
	Then the multiplicity tree $T(R)$ of the Arf semigroup associated to $R^{*}$ is the tree described by the matrix $$ M(T(R))_E= \left(  \begin{matrix} 0 &1& 2& 1 \\ 0& 0& 1& 3 \\ 0& 0 &0 &1 \\ 0& 0 &0 &0 \\\end{matrix}\right),$$
	where $E=\left\{ [5],[2,2,2],[3,3],[2,2,2]\right\}$.
	
	The conductor of $\nu(R^*)$ is $c=(6,6,6,6)$, therefore $$(t^{6},u^6,v^{6},w^6) \left(\mathbb{K}[[t]] \times  \mathbb{K}[[u]] \times \mathbb{K}[[v]] \times  \mathbb{K}[[w]]\right) \subseteq R^{*}.$$
	We have that
	$$\textrm{Small}(\nu(R^*))=\left\{ ( 5, 2, 3, 2), ( 5, 4, 3,4 ), ( 5, 6, 3, 6 ), ( 6, 2,6, 2), ( 6, 4,6, 4 ), c=( 6, 6,6, 6 ) \right\}.$$
	From the minimal tree we can recover the elements of $R^{*}$ with valuation belonging to $\textrm{Small}(\nu(R^*))$. We can calculate the Arf closure truncating the terms with degree bigger than the conductor. So we obtain:
	$$\left\{ (t^5,u^2,v^3,w^2), (t^ 5, u^4, v^3, w^4 ), ( t^5, 0, v^3, 0 ), ( 0, u^2, 0, w^2 ), ( 0, u^4, 0, w^4 ) \right\}.$$
	Finally we have 
	$$ R^{*}=\mathbb{K}(1,1,1,1)+\mathbb{K} (t^5,u^2,v^3,w^2)+\mathbb{K}(t^ 5, u^4, v^3, w^4 )+ \mathbb{K}( t^5, 0, v^3, 0 )+$$$$+\mathbb{K}( 0, u^2, 0, w^2 )+\mathbb{K}  ( 0, u^4, 0, w^4 ) +(t^{6},u^6,v^6,w^6) \left(\mathbb{K}[[t]] \times  \mathbb{K}[[u]] \times \mathbb{K}[[v]] \times  \mathbb{K}[[w]]\right).$$ \label{f}
\end{ex}

\section{A bound for the series}
\label{section4}
In the previous sections, we have presented an algorithm to compute the Arf closure of an algebroid curve. Now, we would like to find a bound for the truncation of the series expansion in the parametrization, in order to improve the speed of the algorithm.\\
Our strategy is based on the following theorem that generalizes the Arslan-Sahin theorem to the case of two branches algebroid curves. Thus, in the following, we focus on the two branches case.\\
Let us fix some notation.
Let $R$ be a two-branches curve with parametrization
$$R=\K[[(\phi_1(t),\psi_1(u)),\ldots,(\phi_n(t),\psi_n(u))]],$$
 we call $c=(c[1],c[2])$ the conductor of $\nu(R^*)$. 
Furthermore, we denote by $\overline{\phi_i(t)}$ and $\overline{\psi_i(u)}$  the formal power series obtained from $\phi_i(t)$ and $\psi_i(u)$ respectively by removing all elements with order greater than $c[1]+1$ and $c[2]+1$.
 Finally, we introduce: $$\overline{R}=\K[[(\overline{\phi_1(t)},\overline{\psi_1(u)}),\ldots,(\overline{\phi_n(t)},\overline{\psi_n(u)})]].$$

\begin{teo} 
	\label{T1}
	If we apply the algorithm to both $R$ and $\bar{R}$ we obtain the same multiplicity tree.
\end{teo}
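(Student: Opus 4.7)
The plan is to run both algorithms---applied to $R$ and to $\bar{R}$---in parallel, and to show by induction on the blow-up level $m \in \{1,\ldots,N\}$ that the parametrizations of $R_m$ and $\bar{R}_m$ produced at step $m$ satisfy $\textrm{mult}^*(R_m)=\textrm{mult}^*(\bar{R}_m)$ and $\mathfrak{P}(R_m)=\mathfrak{P}(\bar{R}_m)$. Since the multiplicity tree is read off precisely from these two data at each level (together with the adjacency rule via non-zero scalar products), coincidence of the trees follows immediately. The base case $m=1$ is clear: truncating above the orders $c[1]+1$ and $c[2]+1$ does not remove any term of order $\leq c[i]$, and in particular preserves the minimal orders $\textrm{ord}(\phi_i(t))$ and $\textrm{ord}(\psi_i(u))$, so the multiplicity vector, the partition, and suitable minimal-valuation elements of $R$ and $\bar{R}$ coincide.

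For the inductive step, I would introduce a \emph{truncation margin} along each branch $i \in \{1,2\}$: at level $m$ this is $c[i]+1-\sum_{k=1}^{m-1}\textrm{mult}(R_k)[i]$, measuring how much head-room is still available above the current multiplicity in each component. Dividing the parametrization by an element $x_{R_m}$ of minimal valuation (which, by Remark \ref{R1}, can be chosen as a generator or a sum of two generators, so its ``tail'' above the truncation threshold has a controlled shape) reduces every order by exactly $\textrm{mult}(R_m)[i]$ on the $i$-th component. Consequently, terms that are truncated in $\bar{R}_m$ still lie above the new margin in $\bar{R}_{m+1}$, so the multiplicity vector of the $(m+1)$-st blow-up and the locality condition from Lemma \ref{L1} depend only on the portion of the parametrization common to $R_{m+1}$ and $\bar{R}_{m+1}$.

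The main obstacle is to guarantee that the truncation margin remains strictly positive along every still-active branch throughout all $N-1$ blow-ups. This is exactly where the conductor formula $c[i]=\sum_{k=1}^{\max(l_i,p_1)}M_i[k]$ recalled in Section \ref{section2} plays its role: the total order reduction along branch $i$ while it contributes non-trivially to the tree equals this sum, and once branch $i$ reaches the component $\K[[t_i]]$ the truncation becomes vacuous, since the ring already contains the full tail. A case distinction between $l_i\geq p_1$ and $l_i<p_1$ (the second handled because, in the glued phase, the multiplicity along an ``exhausted'' branch is already $1$ and contributes only unit reductions which are still absorbed by $c[i]$) shows that the margin never reaches zero before branch $i$ trivializes; this is the delicate estimate in the argument.

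Putting the inductive steps together, $\textrm{mult}^*(R_m)=\textrm{mult}^*(\bar{R}_m)$ for all $m$; both algorithms therefore terminate at the same level $N$, characterized by Remark \ref{R5}, and since the adjacency of the resulting trees is determined solely by these multiplicity vectors, we conclude $T(R)=T(\bar{R})$, as required.
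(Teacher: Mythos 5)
Your proposal is, at its core, the same argument the paper gives: you track how far the truncated tail sits above the running ``budget'' $c[i]+1-\sum_{k<m}\mathrm{mult}(R_k)[i]$, and you use the conductor formula $c[i]=\sum_{k=1}^{\max(l_i,p_1)}M_i[k]$ to show that this budget stays large enough that the tail never influences the choice of a minimal-valuation element nor the split test of Lemma~\ref{L1}. The paper's own inequality
\begin{align*}
\nu\bigl(\chi^{(i)}\bigr) &> \Bigl(c[1]+1-\textstyle\sum_{j<i}M_1[j],\; c[2]+1-\textstyle\sum_{j<i}M_2[j]\Bigr)\\
 &\geq \bigl(M_1[i]+1,\,M_2[i]+1\bigr) > (0,0)
\end{align*}
is precisely the positivity-of-margin estimate you describe (note it is in fact the slightly stronger statement ``margin $>M_i[m]$,'' not merely ``margin $>0$,'' which is what condition~(i) needs), and the appeal to the Arslan--Sahin result once the branches split at level $p_1+1$ is also the same.

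The one genuine difference is in framing, and it hides a subtlety worth flagging. You propose running the algorithm on $R$ and on $\bar{R}$ in parallel and comparing the two blow-up chains level by level. But $\bar{R}_{m+1}$ is \emph{not} the truncation of $R_{m+1}$: dividing by different minimal elements (one full, one truncated) produces genuinely different power series, and you must prove that they agree modulo the level-$m$ threshold before the comparison of $\mathrm{mult}^*$ and $\mathfrak{P}$ at level $m+1$ makes sense (this is exactly the content of Remark~\ref{R6}). Your phrase ``depend only on the portion of the parametrization common to $R_{m+1}$ and $\bar{R}_{m+1}$'' gestures at this but does not establish it. The paper avoids this extra comparison altogether by tracking the tail $(\chi_1^{(m)},\chi_2^{(m)})$ \emph{inside a single run} of the algorithm on $R$, and showing directly that it never participates in any decision; this is cleaner and is the form I would recommend. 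Your case split $l_i\ge p_1$ versus $l_i<p_1$ is already absorbed into the paper's single chain of inequalities via the $\max(l_i,p_1)$ in the conductor formula, so no separate case analysis is needed for $i\le p_1$.
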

\begin{proof}
	
	Let us start writing the representation of an arbitrary element of the parametrization of $R$,
	$$(\phi_i^{(1)}(t),\psi_i^{(1)}(u))=\left(\sum_{i\leq c[1]+1}a_it^i+\sum_{i>c[1]+1}a_it^i,\sum_{i\leq c[2]+1}b_iu^i+\sum_{i>c[2]+1}b_iu^i\right).$$
	We denote by
	$$(\chi_1^{(1)}(t),\chi_2^{(1)}(u))=\left(\sum_{i>c[1]+1}a_it^i,\sum_{i>c[2]+1}b_iu^i\right)$$
	and
	$$k=(k[1],k[2])=(ord(\chi_1^{(1)}(t)), ord(\chi_2^{(1)}(u)))>(c[1]+1,c[2]+1).$$
	Now, we want to follow the path of $\chi_1^{(1)}(t)$ and $\chi_2^{(1)}(u)$ in the algorithm in order to observe that by removing them from parametrization, the result of the algorithm remains unchanged. We denote with $(\chi_1^{(i)}(t),\chi_2^{(i)}(u))$ the series obtained by $(\chi_1^{(1)}(t),\chi_2^{(1)}(u))$ at $i$-th step of the algorithm.
	
	To prove the thesis, it is necessary to
	prove that $(\chi_1^{(i)}(t),\chi_2^{(i)}(u))$ satisfy the following hypothesis at the $i$-th step:
	\begin{itemize}
		\item[i)] $ord(\chi_1^{(i)}(t))> M_1[i]$ and $ord(\chi_2^{(i)}(u))> M_2[i]$;
		\item[ii)] neither $ord(\chi_1^{(i)}(t))$ nor $ord(\chi_2^{(i)}(u))$ are $0$.
	\end{itemize}
	If $i)$ is true we have that the monomials in $(\chi_1^{(i)}(t),\chi_2^{(i)}(u))$ are not involved in the choice of the minimal valuation elements at the $i$-th step. If $ii)$ is true they are not involved in the splits as consequence of Lemma \ref{L1}.\\
	So, if both hypothesis are true, the monomials in $(\chi_1^{(i)}(t),\chi_2^{(i)}(u))$ are not involved in the $i$-th step of the algorithm.\\
	If $p_1$ is the highest level were the branches in $R$ are joined, for all $i\leq p_1$, we have that:
	{\small
		\begin{eqnarray*}
			\nu(\chi_1^{(i)}(t),\chi_2^{(i)}(u))& \geq &(k[1]-M_1[1]-\ldots-M_1[i-1],k[2]-M_2[1]-\ldots-M_2[i-1])>\\
			&>&(c[1]+1-M_1[1]-\ldots-M_1[i-1],c[2]+1-M_2[1]-\ldots-M_2[i-1])=\\
			&=& \left(\sum_{j=1}^{\max(l_1,p_1)}{M_1[j]}+1- \sum_{j=1}^{i-1}{M_1[j]},\sum_{j=1}^{\max(l_2,p_1)}{M_2[j]}+1- \sum_{j=1}^{i-1}{M_2[j]} \right) \geq \\
			&\geq& \left(\sum_{j=1}^{i}{M_1[j]}+1- \sum_{j=1}^{i-1}{M_1[j]},\sum_{j=1}^{i}{M_2[j]}+1- \sum_{j=1}^{i-1}{M_2[j]} \right) =\\
			&=&(M_1[i]+1,M_2[i]+1) >(M_1[i],M_2[i])>(0,0)	\end{eqnarray*}}
\noindent	So the hypothesis $i)$ and $ii)$ are satisfied for $\chi_1^{(i)}(t),\chi_2^{(i)}(u)$ with $i\leq p_1$. When $i > p_1$ the algorithm works individually on each branch, therefore is the same as the one presented by Arslan-Sahin. Thus, because  we have that $\chi_1^{(p_1+1)}(t)$ and $\chi_2^{(p_1+1)}(u)$ are element with valuation strictly greater then the conductor of $R_1^{(p_1+1)}$ and $R_2^{(p_1+1)}$ respectively,  for the Arslan-Sahin theorem (cf.\cite[Thm. 2.4]{Arf:closure}), $\chi_1^{(p_1+1)}(t),\chi_2^{(p_1+1)}(u)$ are not involved in the next steps of the algorithm and this concludes the proof.
\end{proof}

\begin{oss}
	\label{R6}
	We want to point out that the previous theorem does not imply that the chains of blow-ups obtained applying the algorithm on $R$ and $\bar{R}$ are the same. In general, the parametrization of each blow-up and the minimal tree are different, but they are equal  modulo $\langle t^{c+2}, u^{c+2} \rangle $ (when we truncate all the elements of degree greater than $c+1$).
\end{oss}

In the previous section, we have computed a presentation of the Arf closure starting by any minimal tree of the curve and it does not depends on the minimal tree chosen. For this reason we can enunciate the following obvious corollary.

\begin{cor}
	\label{C1}
	Using the same notation of previous theorem. $R$ and $ \overline{R} $ have the same Arf closure.
\end{cor}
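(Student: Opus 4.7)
My plan is to deduce the corollary by combining Theorem \ref{T1} with the explicit presentation of the Arf closure developed in Discussion \ref{D1}, and by exploiting the precise form of the discrepancy between the two minimal trees described in Remark \ref{R6}.

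\textbf{Step 1: equality of numerical data.} First I would invoke Theorem \ref{T1}: since the algorithm applied to $R$ and to $\overline{R}$ produces the same multiplicity tree $T$, we immediately have $\nu(R^*)=\nu(\overline{R}^*)$, so in particular the conductor $c=(c[1],c[2])$ and the finite set $\mathrm{Small}(\nu(R^*))$ are the same for both rings.

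\textbf{Step 2: parallel presentations.} Next I would apply Discussion \ref{D1} to both curves. For $R$, this yields a presentation
$$R^{*}=\mathbb{K}(1,1)+\mathbb{K}y_1+\dots+\mathbb{K}y_k+(t^{c[1]},u^{c[2]})\cdot\bigl(\mathbb{K}[[t]]\times\mathbb{K}[[u]]\bigr),$$
where each $y_i$ is obtained as a product of the nodes of a subtree of the minimal tree of $R$, corresponding to an element of $\mathrm{Small}(\nu(R^*))$. Applying the same recipe to $\overline{R}$ gives
$$\overline{R}^{*}=\mathbb{K}(1,1)+\mathbb{K}\overline{y}_1+\dots+\mathbb{K}\overline{y}_k+(t^{c[1]},u^{c[2]})\cdot\bigl(\mathbb{K}[[t]]\times\mathbb{K}[[u]]\bigr),$$
with $\nu(\overline{y}_i)=\nu(y_i)$ for each $i$, because the underlying combinatorial subtrees are the same.

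\textbf{Step 3: comparing generators via Remark \ref{R6}.} The core of the argument is to show that $y_i-\overline{y}_i\in C:=(t^{c[1]},u^{c[2]})\cdot(\mathbb{K}[[t]]\times\mathbb{K}[[u]])$ for every $i$. By Remark \ref{R6}, the corresponding nodes of the two minimal trees agree modulo monomials of order $>c[1]+1$ in $t$ and $>c[2]+1$ in $u$. Writing $y_i$ and $\overline{y}_i$ as products of their respective nodes and expanding the difference telescopically, every term of the expansion contains one factor that is already of order $>c[j]+1$ in the $j$-th component, times remaining factors whose orders are nonnegative; this forces each summand to live in $C$. Consequently, $\mathbb{K}y_i+C=\mathbb{K}\overline{y}_i+C$ for every $i$, and substituting into the two presentations above gives $R^{*}=\overline{R}^{*}$.

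\textbf{Main obstacle.} The only nontrivial point is the telescoping estimate of Step 3: one has to verify that the tail terms really stay inside the conductor ideal $C$ and that the node factorizations on both sides match up index by index. This is straightforward given that Theorem \ref{T1} yields the same tree and Remark \ref{R6} quantifies the node-wise discrepancy, but it is the step where care is needed so that no factor with a constant component spoils the order bookkeeping.
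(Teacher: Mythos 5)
Your proof is correct and follows the same route the paper intends: Theorem~\ref{T1} gives equality of multiplicity trees (hence of $\nu(R^*)$, the conductor, and $\mathrm{Small}(\nu(R^*))$), Remark~\ref{R6} gives agreement of the minimal trees modulo the conductor ideal, and the presentation of Discussion~\ref{D1} then produces the same ring. The paper itself dismisses the corollary as ``obvious'' without writing out the telescoping estimate showing $y_i-\overline{y}_i$ lands in the conductor ideal $C$, so your Step~3 supplies exactly the detail the paper leaves implicit.
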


From the previous Corollary it follows that our new problem is to find  a way to estimate the conductor of $\nu(R^*)$ without actually knowing $R^*$. Now we see how to do that by using the information given by the starting parametrization of $R$.
Let us start by considering separately the two branches:
$$R^1=\K[[\phi_1(t),\ldots, \phi_n(t)]]\hspace{1cm}R^2=\K[[\psi_1(u),\ldots, \psi_n(u)]].$$
As we saw in the Remark \ref{R4} , it is possible to apply the Arslan-Sahin's algorithm in order to find the multiplicity sequences $M_1$ and $M_2$ of the two branches.
$M_1$ and $M_2$ are multiplicity sequences so they must satisfy the following property:$$ \forall k\geq1 \textrm{ there exist }  s_{1,k} \textrm{ and } s_{2,k}\in \mathbb{N}, \textrm{ such that }  s_{i,1}\geq k+1, s_{i,2}\geq k+1 \textrm{ and }$$
$$M_1[k]=\sum_{j=k+1}^{s_{1,k}}M_1[j],\hspace{1cm}M_2[k]=\sum_{j=k+1}^{s_{2,k}}M_2[j].$$
If $L=\max\{l_1,l_2\}$ we can define the following vectors:
$$S(1)=[s_{1,1},s_{1,2},\ldots,s_{1,L}],\hspace{1cm} S(2)=[s_{2,1},s_{2,2},\ldots,s_{2,L}].$$
Now let us consider the set $D(1,2)=\{k:s_{1,k}\neq s_{2,k}\}$ and we  suppose that $D(1,2)\neq \emptyset$ (i.e the two sequences are not equal). In this case we define $k_E(1,2)=\min\{\min(s_{1,k},s_{2,k}): k\in D(1,2)\}$. We present the following theorem, that was proved in (\cite{G:Z})

\begin{teo}\cite[Prop 1.2]{G:Z}
	\label{T2}
	If $T$ is the tree of an algebroid two-branches curve with $D(1,2)\neq \emptyset$, then $k_E(1,2)+1$ is the lowest level where the two branches are prevented from being glued in $T$; in other words $p_1\leq k_E(1,2)$.
\end{teo}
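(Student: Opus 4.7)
The plan is to use the structural properties of multiplicity trees of Arf good semigroups recalled in Subsection 1.3, applied to the specific shape of a two-branches tree that is glued along its trunk up to level $p_1$ and then splits. The key observation is that uniqueness of the representation $M_i[k]=\sum_{j=k+1}^{s_{i,k}}M_i[j]$ (partial sums of a non-increasing sequence of positive integers are strictly monotone in the upper limit, so $s_{i,k}$ is pinned down) will let me translate the Arf property on a single node into an equation involving $p_1$, $s_{1,k}$ and $s_{2,k}$.

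First I would record the shape of $T$: for every level $k\leq p_1$ the common node has coordinates $(M_1[k],M_2[k])$, while for $k>p_1$ the tree consists of two disjoint linear branches whose nodes are $(M_1[k],0)$ and $(0,M_2[k])$. Then for each $k\leq p_1$, I would apply the third property of Arf multiplicity trees: $(M_1[k],M_2[k])$ must equal the sum of the nodes of some finite subtree $T'$ rooted at that node (excluding the root). Since $T'$ is connected, it is determined by three indices $(r_0,r_1,r_2)$: the lowest trunk level $r_0\in\{k,k+1,\ldots,p_1\}$ reached, and the lowest levels $r_1,r_2\geq p_1$ reached on branch 1 and branch 2 respectively, subject to the constraint that if $r_1>p_1$ or $r_2>p_1$ then $r_0=p_1$. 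Writing out the two coordinate equations yields
$$M_1[k]=\sum_{j=k+1}^{R_1}M_1[j],\qquad M_2[k]=\sum_{j=k+1}^{R_2}M_2[j],$$
where $R_i=r_0$ when $r_i=p_1$ and $R_i=r_i$ otherwise. By uniqueness this forces $R_i=s_{i,k}$.

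The analysis then splits into two exclusive cases. Either $R_1=R_2=r_0\leq p_1$ (the ``trunk-only'' subtree), which forces $s_{1,k}=s_{2,k}\leq p_1$; or the subtree extends strictly past $p_1$ on at least one branch, forcing $r_0=p_1$ and hence $R_i=\max(p_1,r_i)\geq p_1$, giving $s_{1,k}\geq p_1$ and $s_{2,k}\geq p_1$. Consequently, if $k\in D(1,2)$ and $k\leq p_1$, the first case is ruled out and $\min(s_{1,k},s_{2,k})\geq p_1$; for $k\in D(1,2)$ with $k>p_1$, the general lower bound $s_{i,k}\geq k+1>p_1$ gives the inequality for free. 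Taking the minimum over $D(1,2)$ yields $k_E(1,2)\geq p_1$, which is the thesis.

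The main obstacle I expect is step 2, namely the careful bookkeeping of which connected subtrees below level $k$ are allowed in the glued-then-split two-branches structure, and the verification that the constraints on $(r_0,r_1,r_2)$ cover all possibilities (in particular that we do not miss a configuration in which the trunk is not taken fully down to $p_1$ yet some off-trunk node is still reached, which connectivity forbids). Once this combinatorial enumeration is pinned down and combined with the monotonicity of partial sums that makes $s_{i,k}$ unique, the dichotomy and the resulting bound follow immediately.
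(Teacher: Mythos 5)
Your argument is correct and takes essentially the same route as the paper's: both proofs rest on applying the third structural property of Arf multiplicity trees (a node is the sum of a connected subtree hanging below it) at a level $k$ where $s_{1,k}\neq s_{2,k}$, and both exploit the dichotomy that a connected subtree below the split level is either confined to the common trunk (forcing $s_{1,k}=s_{2,k}$) or must contain the whole trunk down to $p_1$ (forcing both $s_{i,k}\geq p_1$). The only presentational difference is that you argue directly for every $k\in D(1,2)$ and then take the minimum, whereas the paper argues by contradiction at the specific index $\overline{k}$ realizing $k_E(1,2)$; your version makes the subtree bookkeeping a bit more explicit than the paper's compressed phrasing, but the content is the same.
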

\begin{proof}
	Suppose by contradiction that the first and the second branches are glued at level $k_E(1,2)+1$. From the definition of $k_E(1,2)+1$, there exists $\overline{k}\in D(1,2)$ such that $k_E(1,2)=\min\{s_{1,\overline{k}}, s_{2,\overline{k}}\}$. Without loss of generality suppose that $\min\{s_{1,\overline{k}}, s_{2,\overline{k}}\}=s_{1,\overline{k}}$ (where $s_{1,\overline{k}}\neq s_{2,\overline{k}}$).
	
	We have the following nodes in the multiplicity tree
	$$(M_1[\overline{k}],M_2[\overline{k}]),\ldots,(M_1[k_E(1,2)],M_2[k_E(1,2)]),\ldots,(M_1[k_E(1,2)+1],M_2[k_E(1,2)+1]),\ldots$$
	We have that $k_E(1,2) = s_{1,\overline{k}}$ so
	$$M_1[\overline{k}]=\sum_{i=\overline{k}+1}^{k_E(1,2)}M_1[i]$$
	while $k_E(1,2)+1 = s_{1,\overline{k}}+1\leq s_{2,\overline{k}}$ so
	$$M_2[\overline{k}]=\sum_{i=\overline{k}+1}^{s_{2,\overline{k}}}M_2[i]\geq \sum_{i=\overline{k}+1}^{k_E(1,2)+1}M_2[i]$$
	These facts easily imply that the node $(M_1[\overline{k}],M_2[\overline{k}])$ cannot be expressed as a sum of the nodes of a
	subtree rooted in it, so we have a contradiction. Two branches are forced to split up only when
	we have this kind of problem, so the minimality of $k_E(1,2)$ guarantees that they can be glued at
	level $k_E(1,2)$ (and obviously at lower levels).
\end{proof}

If we set:
$$d_1=\max\{l_1,k_E(1,2)\},\hspace{1cm} d_2=\max\{l_2,k_E(1,2)\},$$
we have:
\begin{eqnarray*}
	&c[1]+1= \sum_{i=1}^{\max(l_1,p_1)}{M_1[i]}+1 \leq M_1[1]+\ldots+M_1[d_1]+1,\\
	&c[2]+1=\sum_{i=1}^{\max(l_2,p_1)}{M_2[i]}+1 \leq M_2[1]+\ldots+M_2[d_2]+1.
\end{eqnarray*}

So, if we put:
\begin{eqnarray*}
	&b_1=M_1[1]+\ldots+M_1[d_1]+1,\\
	&b_2=M_2[1]+\ldots+M_2[d_2]+1,
\end{eqnarray*}
as consequence of the Theorem \ref{T1}, we can use the vector $b_O=(b_1,b_2)$ as a bound for the series expansions in the parametrizations.\\
We have found a bound when $D(1,2)\neq \emptyset$ by only using the numeric properties of the multiplicity sequences. When $D(1,2)= \emptyset$ we cannot make assumptions on the split level by only using the $M_i$ but we need to work directly on the parametrization in order to find a suitable bound.

Let us suppose that we have an algebroid curve with two branches and $D(1,2)=\emptyset$.
In this case we will do the following positions in order to simplify the notation.
We denote with $c_r$ the conductor of the branches $R^{1}$ and $R^{2}$ (in fact, in this case the two conductors are equal). We  also set $l=l_1=l_2$.
Now we define $Dis(1,2)=\{i\in \{1,\ldots,n\}: \nu(\phi_i(t))\neq \nu(\psi_i(u))\}$ and we call discrepancies the elements of this set.
If $Dis(1,2) \neq \emptyset $, we define also $$D=\min\{\min\{\nu(\phi_i(t)),\nu(\psi_i(u))\},\hspace{0.1cm}i\in Dis(1,2)\}$$
which is the smallest order that causes a discrepancy.
\begin{ex}
\label{E3}
	Let us consider the algebroid curve:
	$$R=\K[[(t^3+t^4,u^3+u^7),(t^8+t^9,u^8),(t^{12}+t^{15},u^{13}+u^{14}),(t^{21},u^{17}+u^{19}
	)]].$$
	The multiplicity tree associated to the ring is:
	\begin{center}
		\begin{tikzpicture}[grow'=up,sibling distance=13pt,scale=.70]
		\tikzset{level distance=50pt,every tree node/.style={draw,ellipse}} \Tree [ .$(3,3)$ [ .$(3,3)$ [ .$(2,2)$ [ .$(1,1)$  [.$(1,1)$ [ .$(1,1)$ [ .$(1,0)$ ] [ .$(0,1)$ ] ] ] ]]]]  \end{tikzpicture} \end{center}
	So we have: $D(1,2)=\emptyset$, $Dis(1,2)=\{3,4\}$ and $$D=\min\{\min\{12,13\},\min\{21,17\}\}=\min\{12,17\}=12.$$
\end{ex}
\begin{lem}
	\label{L3}
	Let
	$$R=\K[[(\phi_1(t),\psi_1(u)),\ldots,(\phi_n(t),\psi_n(u))]]$$
	be an algebroid branch such that
	\begin{itemize}
		\item[i)] $M_1=M_2$ ($D(1,2)=\emptyset$);
		\item[ii)] $Dis(1,2)\neq \emptyset$.
	\end{itemize}
Then we have $\max\{c_r,D\}\geq c[1]=c[2]$.
\end{lem}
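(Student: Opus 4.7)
My plan is to unpack the conductor formula and then exploit the Arf subtree representation of $\nu(R^*)$. Set $s_K = \sum_{j=1}^{K} M_1[j]$, and observe that since $M_1 = M_2$ we have $s_K = \sum_{j=1}^{K} M_2[j]$, $l_1 = l_2 = l$, and $c_r = s_l$. The formula $c[i] = \sum_{k=1}^{\max(l_i, p_1)} M_i[k]$ (derived earlier in the paper) then gives $c[1] = c[2] = s_{\max(l, p_1)}$. The statement splits into two cases according to the relative size of $p_1$ and $l$.

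The case $p_1 \leq l$ is immediate: $c[1] = s_l = c_r$, hence $\max\{c_r, D\} \geq c_r = c[1]$.

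The interesting case is $p_1 > l$, where $c[1] = s_{p_1}$. The goal reduces to showing $D \geq s_{p_1}$. The key combinatorial lemma I would prove is this: any $(a, b) \in \nu(R^*)$ with $a \neq b$ satisfies $\min(a, b) \geq s_{p_1}$. To see this, write $(a, b) = \sum_{v \in T'} v$ for some finite subtree $T'$ of the multiplicity tree $T$ of $\nu(R^*)$ rooted at $\textbf{n}_1^1$. Because $T'$ is connected and rooted, its trunk nodes (those at levels $1, \dots, p_1$) form an initial segment $\{\textbf{n}_1^1, \textbf{n}_1^2, \dots, \textbf{n}_1^K\}$ for some $K \leq p_1$. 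If $K < p_1$, connectivity prevents $T'$ from containing any node at level $K+1$ or beyond, so $(a, b) = (s_K, s_K)$, contradicting $a \neq b$. Hence $K = p_1$, and $T'$ contains the full trunk, yielding $(a, b) = (s_{p_1} + \alpha, s_{p_1} + \beta)$ with nonnegative contributions $\alpha, \beta$ from the two separated branches. In particular $\min(a, b) \geq s_{p_1}$.

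With the lemma in hand, I apply it to each discrepancy. For $i \in Dis(1,2)$, the element $(\phi_i(t), \psi_i(u)) \in R \subseteq R^*$ contributes $(\nu(\phi_i(t)), \nu(\psi_i(u)))$ to $\nu(R^*)$, and by definition of $Dis(1,2)$ these components are unequal. The lemma then gives $\min\{\nu(\phi_i(t)), \nu(\psi_i(u))\} \geq s_{p_1}$ for every $i \in Dis(1,2)$, and minimizing over $i$ yields $D \geq s_{p_1} = c[1]$. Combining both cases gives $\max\{c_r, D\} \geq c[1]$ as required. The only delicate step is the subtree lemma, but it is really just a consequence of connectedness together with the fact that under $M_1 = M_2$ all trunk nodes are diagonal vectors.
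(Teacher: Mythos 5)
Your proof is correct and takes essentially the same route as the paper: both rest on the observation that, when $M_1 = M_2$, the rooted-subtree representation of $\nu(R^*)$ forces any non-diagonal element $(a,b)$ to include the full trunk, so $\min(a,b) \geq s_{p_1}$, which applied to a discrepancy yields the bound on $D$. Your version cleanly isolates this as a standalone lemma and skips the paper's intermediate step of first writing $D$ as a partial sum $s_k$ of $M_1$ and arguing $p_1 \le k$, but the underlying argument is identical.
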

\begin{proof}
From the definition of $D$, it follows that there exists an element of the type $(D,x)$ in $\nu(R) \subseteq \nu(R^*)$ with $x>D$ (or equivalently of the type $(y,D)$ with $y>D$).
We know that there exists an integer $k$ such that $$D=\sum_{i=1}^k{M_1[i]}.$$
Taking in account that the multiplicity tree $T(R)$ has two identical branches, it is easy to understand that $(D,x) \in \nu(R^*)$ with $x>D$ implies  $p_1 \leq k$ (if we had $k<p_1$ the only possible element with valuation of the type $(D,x)$ in $\nu(R^*)$ would be $(D,D)$).
So we have 

$$ c[2]=c[1]=\sum_{i=1}^{\max(l_1,p_1)}{M_1[i]}\leq \sum_{i=1}^{\max(l_1,k)}{M_1[i]}=\max\{c_r,D\}. $$
\end{proof}

As a consequence of this theorem, we can take $b_D=(\max\{c_r,D\}+1,\max\{c_r,D\}+1)$ as a bound for an algebroid curve with $D(1,2)=\emptyset$ and $Dis(1,2)\neq \emptyset$.

Now we only need to understand how to deal with the case of algebroid curves with $D(1,2)=\emptyset$ and $Dis(1,2)=\emptyset$.
In this case we have:
\begin{itemize}
	\item[i)] $M_1=M_2$;
	\item[ii)] $\nu(\phi_i(t))=\nu(\psi_i(u))$ $\forall i=1,\ldots, n$.
\end{itemize}

Without loss of generality, we can rename the elements of the parametrization  in order to have:
$$\nu(\phi_1(t),\psi_1(u))\leq \nu(\phi_2(t),\psi_2(u))\leq \ldots \leq \nu(\phi_n(t),\psi_n(u)).$$
Let $(\phi_i(t),\psi_i(u))$ be the first element with $i>1$ such that at least one of the following holds
\begin{itemize}
	\item $\phi_1(t)\neq \psi_1(t)$
	\item $\phi_i(t)\neq \psi_i(t)$,
\end{itemize}  (it  must exist an element of this type because  otherwise we would not have an algebroid curve).
In this case we can always find $a,b,r,s\in \N$, such that
$$({\tilde{\phi}(t)},{\tilde{\psi}(u)})=a(\phi_1(t),\psi_1(u))^r+b(\phi_i(t),\psi_i(u))^s$$
with $ord(\tilde{\phi}(t)) > ord(\phi_1(t))$.\\

Now let us consider
$$\tilde{R}=\K[[(\tilde{\phi}(t),\tilde{\psi}(u)),(\phi_2(t),\psi_2(u)),\ldots, (\phi_n(t),\psi_n(u))]]$$
and denote with $\tilde{c}$ the conductor of the $\tilde{R}$ Arf closure.\\
Since $\tilde{R}\subseteq R$, we have $c\leq \tilde{c}$. Now, if $\tilde{R}$ is an algebroid curve where both $D(1,2)$ and $Dis(1,2)$ are not empty we have showed how to compute a bound for $\tilde{R}$ and this is also a bound for $R$ since $c\leq \tilde{c}$.\\ 
On the contrary, we can apply the same idea starting by $\tilde{R}$ until we found an algebroid curve with a discrepancy for which we know to compute a bound; we will call this bound $b_G$. 
We note that this process necessarily produces a discrepancy since R is an algebroid curve.
\begin{oss}
	\label{R7}
	We observe that it makes sense compute $b_G$ even when we have a discrepancy. A priori we do not  know in this case which bound is better between $b_D$ and $b_G$, so we will compute both of them and then we will choose the smaller one.
\end{oss}

We will enunciate the following proposition that summarizes what we have seen above.
\begin{prop}
\label{P1}

If $R$ is an algebroid curve and $c$ is the conductor of its Arf closure and we consider the element
$$b=\begin{cases}
b_O\hspace{5cm}&\text{if}\hspace{0.5cm} D(1,2)\neq \emptyset;\\
\min\{b_D,b_G\}\hspace{5cm}&\text{if}\hspace{0.5cm} D(1,2)=\emptyset\wedge Disc(1,2)\neq \emptyset;\\
b_G\hspace{5cm}&\text{if}\hspace{0.5cm} D(1,2)=\emptyset\wedge Disc(1,2)= \emptyset,\\
\end{cases}$$
we have $b\geq (c[1]+1,c[2]+1)$.

\end{prop}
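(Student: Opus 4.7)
The plan is to handle the three cases of the definition of $b$ separately, invoking the results that have already been established.

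For the case $D(1,2) \neq \emptyset$, we use Theorem \ref{T2} to obtain $p_1 \leq k_E(1,2)$. Since $M_i[j] \geq 1$ for every $j$, this inequality propagates under truncated sums: for $i=1,2$ we get $\max(l_i,p_1) \leq \max(l_i,k_E(1,2)) = d_i$, hence
\begin{equation*}
c[i] \;=\; \sum_{j=1}^{\max(l_i,p_1)} M_i[j] \;\leq\; \sum_{j=1}^{d_i} M_i[j] \;=\; b_i - 1,
\end{equation*}
which gives $b_O \geq (c[1]+1, c[2]+1)$.

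For the case $D(1,2) = \emptyset$ and $Dis(1,2) \neq \emptyset$, I would first invoke Lemma \ref{L3} directly: since $c[1] = c[2] \leq \max\{c_r, D\}$, the vector $b_D = (\max\{c_r,D\}+1, \max\{c_r,D\}+1)$ dominates $(c[1]+1, c[2]+1)$. Then $\min\{b_D, b_G\} \geq (c[1]+1, c[2]+1)$ as soon as we also check $b_G \geq (c[1]+1, c[2]+1)$, which is argued as in the next case.

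For the case $D(1,2) = \emptyset$ and $Dis(1,2) = \emptyset$, I would make explicit the inductive construction of $\tilde{R}$ sketched before the statement. Starting from $R$, we iteratively replace the lowest-valuation generator with a combination $a(\phi_1,\psi_1)^r + b(\phi_i,\psi_i)^s$ designed to raise the order on one branch without affecting the order on the other, producing a descending chain of algebroid curves $R \supseteq \tilde{R} \supseteq \tilde{\tilde{R}} \supseteq \cdots$ At each step, we remain inside $R$, so the conductor of the Arf closure can only grow: if $R' \subseteq R$, then $R'^* \subseteq R^*$ (being the smallest Arf ring between them), whence the conductor $c'$ of $\nu(R'^*)$ satisfies $c' \geq c$ componentwise. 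The key point to establish is that this process must terminate with a curve in which either $D(1,2)$ or $Dis(1,2)$ is nonempty, since otherwise all generators would have identical components on both branches, contradicting the fact that $R$ separates the two branches (i.e., that $R$ is honestly two-dimensional in its $\mathbb{N}^2$-valuation, not contained in the diagonal). Once such a curve $R'$ is reached, we apply either the first case or $b_D$ to obtain a bound $b'$ with $b' \geq (c'[1]+1, c'[2]+1) \geq (c[1]+1, c[2]+1)$, and set $b_G = b'$.

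The main obstacle is the termination argument in the last case: one must verify that the elementary combinations used to raise the valuation of the first generator cannot be iterated indefinitely without ever producing a discrepancy, i.e., that the hypothesis $Dis(1,2)=\emptyset$ is unstable under the construction once it is applied to a genuine algebroid curve with two distinct branches. A clean way to formalize this is to argue by contradiction: if no step ever produced a discrepancy, every generator of every $R^{(k)}$ in the chain would satisfy $\phi_j(t) = \psi_j(u)$ after the variable identification $t=u$, so the diagonal map $\K[[t]] \to \K[[t]] \times \K[[t]]$ would contain $R$, contradicting the hypothesis that the two branches are distinct minimal primes of $R$.
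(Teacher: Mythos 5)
Your proof follows the same three-case structure as the paper's preceding discussion, and Cases~1 and~2 are handled correctly: Case~1 is exactly the chain of inequalities the paper records after Theorem~\ref{T2} (monotonicity of partial sums of the $M_i$ plus $p_1 \leq k_E(1,2)$), and in Case~2 you correctly observe that $\min\{b_D,b_G\}$ is a valid bound only because \emph{both} $b_D$ (via Lemma~\ref{L3}) and $b_G$ are, which is what Remark~\ref{R7} implicitly requires.

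The problem is in your ``clean formalization'' of termination in Case~3. You argue that if no step of the iterative construction ever produces a discrepancy, then every generator of every $R^{(k)}$ would satisfy $\phi_j(t) = \psi_j(u)$ after identifying $t=u$, hence $R$ would lie in the diagonal. This inference is not valid: having no discrepancy in $Dis(1,2)$ means only that $\mathrm{ord}(\phi_j) = \mathrm{ord}(\psi_j)$ for all $j$, which is far weaker than $\phi_j = \psi_j$ as power series. For instance $(t+t^3,\,u+u^5)$ has no discrepancy but is not diagonal. The correct statement about the diagonal is only that the construction can always be \emph{started} (there is always some index $i$ with $\phi_1\neq\psi_1$ or $\phi_i\neq\psi_i$, else $R\subseteq\Delta$), but that does not by itself rule out an infinite chain $R\supsetneq\tilde R\supsetneq\tilde{\tilde R}\supsetneq\cdots$ in which every stage still has all generator orders matching. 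Termination needs a different argument — e.g.\ tracking how the order of $\phi_j-\psi_j$ behaves under the cancellation step, or exploiting that the first component $\tilde\phi$ strictly increases in order while $R$ has a finite conductor — and the paper itself only asserts ``this process necessarily produces a discrepancy since $R$ is an algebroid curve'' without a formal proof, so you are trying to fill a genuine gap, but the particular bridge you build (``no discrepancy $\Rightarrow$ diagonal'') collapses.
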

As a consequence of the last proposition and Theorem \ref{T1}, we have that $b$ is a suitable bound for the algorithm.\\
Finally we  show how the bound found in two-branches case can be used to determine a bound in the general case.

\begin{oss}
\label{R8}
If $R$ is an algebroid curve with $n$ branches, parametrized by
$$ R=\mathbb{K}[[ \left( \phi_{11}(t_1),\ldots,\phi_{1n}(t_n)\right),\ldots, \left( \phi_{k1}(t_1),\ldots,\phi_{kn}(t_n)\right)]].$$
We consider
$$\pi_{i,j}(R)=\mathbb{K}[[ \left( \phi_{1i}(t_i),\phi_{1j}(t_j)\right),\ldots, \left( \phi_{ki}(t_i),\phi_{kj}(t_j)\right)]],$$
 the two-branch curve associated with the branches $i$ and $j$ for $i,j=1,\ldots,n$, $i\neq j$. We call $b_{\pi_{ij}(R)}=(b_{\pi_{ij}(R),i},b_{\pi_{ij}(R),j})$ the bound computed for the curve $\pi_{ij}(R)$ where $b_{\pi_{ij}(R),i}$ and $b_{\pi_{ij}(R),j}$ are the components of the bound related to the branches $i$ and $j$ respectively.
If we consider $$b[i]=\max\{b_{\pi_{i,j}(R),i} \hspace{0.5cm} j=1,\ldots,n,j\neq i\},$$
it is easy to observe that $b=(b[1],b[2],\ldots, b[n])$ is a suitable bound for the curve (because the general algorithm performs simultaneously the two case one on each couple of branches).
\end{oss}
\begin{ex}
	\label{E4}
	We want to compute, using the truncation explained in the previous section, the Arf closure of the  ring 
	$$ R=R_1=\mathbb{K}[[(t^5-t^8,u^2+u^6,v^3,w^2+w^9),(t^{6},u^2+u^7+u^{10},v^{7}-v^9,w^2+w^7)]],$$	
	that appeared in the Example \ref{E2}.
	
	If we use tha algorithm of Arslan and Sahin to compute the Arf closure of the rings
	$$ R^1=\mathbb{K}[[t^5-t^8,t^{6}]], R^2=\mathbb{K}[[u^2+u^6,u^2+u^7+u^{10}]], $$ $$ R^3=\mathbb{K}[[v^3,v^{7}-v^9]] , R^4=\mathbb{K}[[w^2+w^9,w^2+w^7]], $$
	we find that the multiplicity tree $T$ of $R^*$ belongs to $\tau(E)$, where $$ E=\left\{ M_1=[5], M_2=[2,2,2], M_3=[3,3], M_4=[2,2,2]\right\},$$
	where with $\tau(E)$ we indicate the family of all the multiplcity trees having multiplicity branches in $E$.
	
	We want compute the bounds $b_{\pi_{ij}(R),i}$ with $i,j=1,2,3,4$, $i\neq j$. Since $b_{\pi_{ij}(R),i}=b_{\pi_{ji}(R),i}$ for all $i,j=1,2,3,4$, $i\neq j$, we can reduce to compute only $b_{\pi_{ij}(R),i}$ where $j>i$.
	
	If $k_E(i,j) \neq \infty $ we have seen that:
	
	$$b_{\pi_{ij}(R),i}=\left(\sum_{k=1}^{\max(l_i,k_E(i,j))}{M_i[k]}\right)+1  \textrm{ and } b_{\pi_{ij}(R),j}=\left(\sum_{k=1}^{\max(l_j,k_E(i,j))}{M_j[k]}\right)+1.$$
	
	We have:
	\begin{itemize}
		\item $ k_E(1,2)=2 \Rightarrow$
		\begin{eqnarray*} b_{\pi_{12}(R),1}&=&\left(\sum_{k=1}^{\max(1,2)=2}{M_1[k]}\right)+1=5+1+1=7; \\ b_{\pi_{12}(R),2}&=&\left(\sum_{k=1}^{\max(3,2)=3}{M_2[k]}\right)+1=2+2+2+1=7. \end{eqnarray*}
		
		\item $ k_E(1,3)=2 \Rightarrow$
		\begin{eqnarray*} b_{\pi_{13}(R),1}&=&\left(\sum_{k=1}^{\max(1,2)=2}{M_1[k]}\right)+1=5+1+1=7; \\ b_{\pi_{13}(R),3}&=&\left(\sum_{k=1}^{\max(2,2)=2}{M_3[k]}\right)+1=3+3+1=7. \end{eqnarray*}
		\item $ k_E(1,4)=2 \Rightarrow$
		\begin{eqnarray*} b_{\pi_{14}(R),1}&=&\left(\sum_{k=1}^{\max(1,2)=2}{M_1[k]}\right)+1=5+1+1=7; \\ b_{\pi_{14}(R),4}&=&\left(\sum_{k=1}^{\max(3,2)=3}{M_4[k]}\right)+1=2+2+2+1=7. \end{eqnarray*}
		\item $ k_E(2,3)=3 \Rightarrow$
		\begin{eqnarray*} b_{\pi_{23}(R),2}&=&\left(\sum_{k=1}^{\max(3,3)=3}{M_2[k]}\right)+1=2+2+2+1=7; \\ b_{\pi_{23}(R),3}&=&\left(\sum_{k=1}^{\max(2,3)=3}{M_3[k]}\right)+1=3+3+1+1=8. \end{eqnarray*}
		\item $ k_E(3,4)=3 \Rightarrow$
		\begin{eqnarray*} b_{\pi_{34}(R),3}&=&\left(\sum_{k=1}^{\max(2,3)=3}{M_3[k]}\right)+1=3+3+1+1=8; \\ b_{\pi_{34}(R),4}&=&\left(\sum_{k=1}^{\max(3,3)=3}{M_4[k]}\right)+1=2+2+2+1=7. \end{eqnarray*}
	\end{itemize}
	
	We have $k_E(2,4)=\infty$ because $M_2=M_4=[2,2,2]$, then to compute ${b}_{\pi_{24}(R)}$ we need to work on the parametrization of $\pi_{2,4}(R)$. We have:
	
	$$ \pi_{2,4}(R)=\mathbb{K}[[(u^2+u^6,w^2+w^9),(u^2+u^7+u^{10},w^2+w^7)]].$$
	
	Both the generators of $\pi_{2,4}(R)$ have valuation $(2,2)$, therefore we have not discrepancies between the orders in the initial parametrization. So we have to produce an element of $\pi_{2,4}(R)$ with discrepancies by manipulating its generators. It suffices to take the difference between them, in fact we find:
	$$\pi_{2,4}(R)  \ni (u^2+u^6,w^2+w^9) - (u^2+u^7+u^{10},w^2+w^7)=(u^6-u^7,-w^7+w^9),$$
	with $\nu((u^6-u^7,-w^7+w^9))=(6,7)$. Because $6=\min(6,7)$ is less or equal than the conductor of $M_2=[2,2,2]$ we can choose $b_{\pi_{24}(R)}=(6+1,6+1)=(7,7)$. 
	
	Finally, denoting with $b[i]$  the bound on the $i$-th branch, we have:
	
	\begin{itemize}
		
		\item $b[1]=\max \left\{ b_{\pi_{12}(R),1}, b_{\pi_{13}(R),1}, b_{\pi_{14}(R),1}\right\}=\max\left\{7,7,7 \right\}=7$;
		\item $b[2]=\max \left\{ b_{\pi_{12}(R),2}, b_{\pi_{23}(R),2}, b_{\pi_{24}(R),2}\right\}=\max\left\{7,7,7 \right\}=7$;
		\item $b[3]=\max \left\{ b_{\pi_{13}(R),3}, b_{\pi_{23}(R),3}, b_{\pi_{34}(R),3}\right\}=\max\left\{7,8,8 \right\}=8$;
		\item $b[4]=\max \left\{ b_{\pi_{14}(R),4}, b_{\pi_{24}(R),4}, b_{\pi_{34}(R),4}\right\}=\max\left\{7,7,7 \right\}=7$.
	\end{itemize}
	
	Then on the $i$-th branch we can truncate all the terms with degree greater than $b[i]$ obtaining the new ring:
	
	$$ S=S_1=\mathbb{K}[[(t^5,u^2+u^6,v^3,w^2),(t^{6},u^2+u^7,v^{7},w^2+w^7)]].$$
	
	Let us show that $S^{*}=R^{*}$.
	
	It is easy to verify that $\pi_{1,2}(S),\pi_{1,3}(S)$ and $\pi_{1,4}(S)$ are all  local. Then for the Lemma  \ref{L2} follows that $\mathfrak{P}(S)=\left\{ \left\{1,2,3,4\right\} \right\}$, in other words $S$ is local.
	
	We have that $\textrm{mult}(S_1)=(5,2,3,2)$. As the minimal value $x_{S_1}$ we can choose $x_1=(t^5,u^2+u^6,v^3,w^2)$.
	
	We have:
	\begin{align*}
	S_2&=\mathbb{K}\left[ \left[(t^5,u^2+u^6,v^3,w^2),\frac{(t^{6},u^2+u^7,v^{7},w^2+w^7)}{x_{S_1}} \right]\right]=\\
	&=\mathbb{K}\left[\left[ (t^5,u^2+u^6,v^3,w^2), \left( t, \frac{1+u^5}{1+u^4},v^4,1+w^5\right) \right]\right]. 
	\end{align*}
	Now we can verify that $\pi_{1,2}(S_2)$ is not local, $\pi_{1,3}(S_2)$ is local, $\pi_{1,4}(S_2)$ is not local and $\pi_{2,4}(S_2)$ is local, therefore $\mathfrak{P}(S_2)=\left\{P_1=\left\{1,3\right\}, P_2=\left\{2,4 \right\}\right\}$.
	We have 
	$$ S_2=S_2^1 \times S_2^2,$$
	
	where $$ S_2^1=\mathbb{K}\left[\left[ (t^5,v^3), \left( t,v^4 \right) \right]\right],$$
	
	\begin{align*}
	S_2^2&=\mathbb{K}\left[\left[ (u^2+u^6,w^2),\left(\frac{1+u^5}{1+u^4},1+w^5\right)\right]\right]=\\
	&=\mathbb{K}\left[\left[ (u^2+u^6,w^2),\left(\frac{-u^4+u^5}{1+u^4},w^5\right)\right]\right].
	\end{align*}
	where, following our conventions on the parametrization, we replace $\left(\frac{1+u^5}{1+u^4},1+w^5\right)$ with  $\left(\frac{1+u^5}{1+u^4},1+w^5\right)-(1,1)=\left(\frac{-u^4+u^5}{1+u^4},w^5\right)$.

	We have $\textrm{mult}(S_2^1)=(1,3)$  and we can choose as element of minimal value the sum $x_{S_2^1}$ of its two generators $$x_{S_2^1}=\left( t+t^5,v^3+v^4\right)$$ while $\textrm{mult}(S_2^2)=(2,2)$ and we can choose as its minimal element $ \displaystyle x_{S_2^2}= \left(u^2+u^6,w^2\right)$.
	Then we have $\textrm{mult}^{*}(S_2)=\left\{ (1,0,3,0), (0,2,0,2) \right\}$ and we can proceed with the computation of $S_3$.
	Thus
	
	$$S_2=\textrm{Bl}(S_2^1) \times \textrm{Bl}(S_2^2),$$
	
	so we have to compute $\textrm{Bl}(S_2^1) $ and $\textrm{Bl}(S_2^2).$

	We have
	$$\textrm{Bl}(S_2^1)=\mathbb{K}\left[\left[\left(\phi_1^{(3)}(t),\psi_1^{(3)}(v)\right),\ldots, \left(\phi_{3}^{(2)}(t),\psi_{3}^{(2)}(v)\right)\right]\right],$$
	
	where
	\begin{itemize}
		
		\item $ \displaystyle \left(\phi_1^{(3)}(t),\psi_1^{(3)}(v)\right)=\left( t+t^5,v^3+v^4\right);$
		
		\item  $ \displaystyle \left(\phi_2^{(3)}(t),\psi_2^{(3)}(v)\right)=\left(\frac{t^4}{1+t^4},\frac{1}{1+v}\right);$
		\item  $ \displaystyle \left(\phi_{3}^{(3)}(t),\psi_{3}^{(3)}(v)\right)=\left(\frac{1}{1+t^4},\frac{v}{1+v}\right).$
	\end{itemize}
	We notice that the second generator has valuation $(4,0)$, then $\textrm{Bl}(S_2^1)$ is not local in $\mathbb{K}[[t]] \times \mathbb{K}[[v]]$. Furthermore we have, with our notation, that $\textrm{mult}^*(\textrm{Bl}(R_2^1))=\left\{(1,0),(0,1)\right\}$.
	Then we have 
	$$ \textrm{Bl}(S_2^1)= \mathbb{K}[[t]] \times \mathbb{K}[[v]].$$
	
	Now we can compute $\textrm{Bl}(S_2^2) $. We have
	$$\textrm{Bl}(S_2^2)=\mathbb{K}\left[\left[ \left(u^2+u^6,w^2\right),\left( \frac{-u^2+u^3}{(1+u^4)^2},w^3\right)\right]\right].$$

	Then we have that $\textrm{Bl}(S_2^2)$ is local in $ \mathbb{K}[[u]] \times \mathbb{K}[w]]$, and  $\textrm{mult}(\textrm{Bl}(S_2^2))=(2,2)$.
	
	Then $\mathfrak{P}(S_3)=\left\{ P_1=\left\{1\right\}, P_2=\left\{ 3\right\}, P_3=\left\{ 2,4\right\}\right\} $ and
	
	$$S_3=\mathbb{K}[[t]] \times \mathbb{K}[[v]] \times S_3^3,$$
	
	with $\textrm{mult}^{*}(S_3)=\left\{ (1,0,0,0), (0,0,1,0) ,(0,2,0,2)\right\}$.
	As a minimal element of $S_3^3$ we can choose again  $x_{S_3^3}=\left(u^2+u^6,w^2\right)$.
	
	Thus $$S_4=\textrm{Bl}(\mathbb{K}[[t]]) \times \textrm{Bl}(\mathbb{K}[[v]]) \times \textrm{Bl}(S_3^3)=\mathbb{K}[[t]] \times \mathbb{K}[[v]] \times \textrm{Bl}(S_3^3).$$
	We have:
	$$ \textrm{Bl}(S_3^3)=\mathbb{K}\left[\left[\left(u^2+u^6,w^2\right),\left( \frac{-1+u}{(1+u^4)^3},w\right)\right]\right].$$
	From this it is easy to show that $\textrm{Bl}(S_3^3)= \mathbb{K}[[u]] \times  \mathbb{K}[[w]]$. 
	Then  $$S_4=\mathbb{K}[[t]] \times  \mathbb{K}[[u]] \times \mathbb{K}[[v]] \times  \mathbb{K}[[w]],$$
	
	and we have reached the stop condition for our algorithm.
	
	We found that $N=4$ and
	\begin{itemize}
		\item $ \textrm{mult}^{*}(S_1)=\left\{ (5,2,3,2) \right\},$  \item $\textrm{mult}^{*}(S_2)=\left\{ (1,0,3,0), (0,2,0,2) \right\}, $ \item $\textrm{mult}^{*}(S_3)=\left\{ (1,0,0,0), (0,0,1,0) ,(0,2,0,2)\right\}, $ \item $\textrm{mult}^{*}(S_4)=\left\{ (1,0,0,0), (0,1,0,0) ,(0,0,1,0),(0,0,0,1)\right\}.$
	\end{itemize}
	
	The corresponding minimal elements are:
	
	\begin{itemize}
		\item $x_{S_1}=(t^5,u^2+u^6,v^3,w^2),$  \item $x^1_{2}=\left( t+t^5,1,v^3+v^4,1\right)$ and $x_2^2= \left(1,u^2+u^6,1,w^2\right);$\item $ x_3^1=(t,1,1,1), x_3^2=(1,1,v,1)$ and $ x_3^3= \left(1,u^2+u^6,1,w^2\right);$ \item $x_4^1=(t,1,1,1), x_4^2=(1,u,1,1), x_4^3=(1,1,v,1)$ and $x_4^4=(1,1,1,w)$.
		
	\end{itemize}
	Then we have the following trees:
	\begin{center}
		\begin{tikzpicture}[grow'=up,sibling distance=13pt,scale=.65]
		\tikzset{level distance=50pt,every tree node/.style={draw,ellipse}} 
		\Tree [ .$S_1$ [ .$S_2^1$ [ .$\K[[t]]=S_3^1$ ] [ .$\K[[v]]=S_3^2$         
		]  ] [.$S_2^2$ [ .$S_3^3$ [ .$\K[[u]]$ ] [ .$\K[[w]]$ ] ] ] ]  \end{tikzpicture} 
		\begin{tikzpicture}[grow'=up,sibling distance=13pt,scale=.65]
		\tikzset{level distance=50pt,every tree node/.style={draw,ellipse}} 
		\Tree [ .$(5,2,3,2)$ [ .$(1,0,3,0)$ [ .$(1,0,0,0)$ ] [ .$(0,0,1,0)$         
		]  ] [.$(0,2,0,2)$ [ .$(0,2,0,2)$ [ .$(0,1,0,0)$ ] [ .$(0,0,0,1)$ ] ] ] ]  \end{tikzpicture} \end{center}
	\vspace{1cm}
	\begin{center}
		\begin{tikzpicture}[grow'=up,sibling distance=13pt,scale=.87]
		\tikzset{level distance=50pt,every tree node/.style={draw,ellipse}} \Tree [ .$(t^5,u^2+u^6,v^3,w^2)$ [ .{$\left(t+t^5,1,v^3+v^4,1\right)$} [ .$(t,1,1,1)$ ] [ .$(1,1,v,1)$         
		]  ] [.$(1,u^2+u^6,1,w^2)$ [ .$(1,u^2+u^6,1,w^2)$ [ .$(1,u,1,1)$ ] [ .$(1,1,1,w)$ ] ] ] ]  \end{tikzpicture} \end{center}
	The conductor of $\nu(S^*)$ is $c=(6,6,6,6)$
	If we compare these tree with the tree computed starting by $R$ in the Example \ref{E2}, we can observe that the tree associated to the ring and the multiplicity tree are the same, instead the minimal tree are equal module $c+1=(7,7,7,7)$.
	Then we have $M(T(S))_E=M(T(R))_E$.\\
	We have that
	\begin{align*}
	&\textrm{Small}(\nu(S^*))=\textrm{Small}(\nu(R^*))=\\
	&=\left\{ ( 5, 2, 3, 2), ( 5, 4, 3,4 ), ( 5, 6, 3, 6 ), ( 6, 2,6, 2), ( 6, 4,6, 4 ), c=( 6, 6,6, 6 ) \right\}.
		\end{align*}
	From the minimal tree we can recover the elements of $S^{*}$ with valuation belonging to $\textrm{Small}(\mathcal{S}(T))$. We can calculate the Arf closure by truncating the terms with degree bigger than the conductor. They are:
	$$\left\{ (t^5,u^2,v^3,w^2), (t^ 5, u^4, v^3, w^4 ), ( t^5, 0, v^3, 0 ), ( 0, u^2, 0, w^2 ), ( 0, u^4, 0, w^4 ) \right\}.$$
	
	Finally we have 
	$$ S^{*}=R^*=\mathbb{K}(1,1,1,1)+\mathbb{K} (t^5,u^2,v^3,w^2)+\mathbb{K}(t^ 5, u^4, v^3, w^4 )+ \mathbb{K}( t^5, 0, v^3, 0 )+$$$$+\mathbb{K}( 0, u^2, 0, w^2 )+\mathbb{K}  ( 0, u^4, 0, w^4 ) +(t^{6},u^6,v^6,w^6) \left(\mathbb{K}[[t]] \times  \mathbb{K}[[u]] \times \mathbb{K}[[v]] \times  \mathbb{K}[[w]]\right).$$
\end{ex}
 \begin{acknowledgements}
	The authors would like to thank Marco D'Anna for his helpful comments and suggestions during the developement of this paper. 
\end{acknowledgements}
\newpage
\printbibliography
\end{document}